\theoremstyle{plain}
\newtheorem{proposition}{Proposition}
\newtheorem{theorem}[proposition]{Theorem}
\newtheorem{lemma}[proposition]{Lemma}
\newtheorem{corollary}[proposition]{Corollary}
\theoremstyle{definition}
\theoremstyle{definition}
\newtheorem{example}[proposition]{Example}
\newtheorem{remark}[proposition]{Remark}
\numberwithin{equation}{section}
\numberwithin{proposition}{section}
\gdef\myletter{}
\let\savetheequation\theequation
\def\theequation{\savetheequation\myletter}
\def\red{\color{red}}
\newcommand{\CC}{{\mathbb C}}
\newcommand{\RR}{{\mathbb R}}
\newcommand{\ZZ}{{\mathbb Z}}
\newcommand{\NN}{{\mathbb N}}
\newcommand{\T}{(\Bbb{C}^*)^d}
\renewcommand{\date}{\today}
\def \bar{\overline}
\def \hat{\widehat}
\begin{document}

%\vskip 3mm

\title[Siciak-Zaharjuta]{\bf Pluripotential Theory and Convex Bodies: A Siciak-Zaharjuta theorem}

\author{T. Bayraktar,* S. Hussung, N. Levenberg** and M. Perera}{\thanks{*Supported by The Science Academy BAGEP, **Supported by Simons Foundation grant No. 354549}}
%\subjclass{32E20}%
%\keywords{Cantor set, polynomial hull}%

\maketitle
%\date
\begin{abstract} We work in the setting of weighted pluripotential theory arising from polynomials associated to a convex body $P$ in $(\RR^+)^d$. We define the {\it logarithmic indicator function} on $\CC^d$:
$$H_P(z):=\sup_{ J\in P} \log |z^{ J}|:=\sup_{ J\in P} \log[|z_1|^{ j_1}\cdots |z_d|^{ j_d}]$$ and an associated class of plurisubharmonic (psh) functions:
$$L_P:=\{u\in PSH(\CC^d): u(z)- H_P(z) =0(1), \ |z| \to \infty \}.$$ We first show that $L_P$ is not closed under standard smoothing operations. However, utilizing a continuous regularization due to Ferrier which preserves $L_P$, we prove a general Siciak-Zaharjuta type-result in our $P-$setting: the weighted $P-$extremal function
$$V_{P,K,Q}(z):=\sup \{u(z):u\in L_P, \ u\leq Q \ \hbox{on} \ K\}$$
associated to a compact set $K$ and an admissible weight $Q$ on $K$ can be obtained using the subclass of $L_P$ arising from functions of the form $\frac{1}{deg_P(p)}\log |p|$ (appropriately normalized).
\end{abstract}

\section{Introduction} A fundamental result in pluripotential theory is that the extremal plurisubharmonic function
$$V_K(z):= \sup \{u(z):u\in L(\CC^d), \ u\leq 0 \ \hbox{on} \ K\}$$
associated to a compact set $K\subset \CC^d$, where $L(\CC^d)$ is the usual Lelong class of all plurisubharmonic (psh) functions $u$ on $\CC^d$ with the property that $u(z) - \log |z| = 0(1)$ as $|z| \to \infty$, may be obtained from the subclass of $L(\CC^d)$ arising from polynomials:
$$V_K(z)=\max[0,\sup\{\frac{1}{deg(p)}\log |p(z)|:p \ \hbox{polynomial}, \ ||p||_K\leq 1\}].$$
More generally, given an admissible weight function $Q$ on $K$ ($Q$ is lowersemicontinuous and $\{z\in K:Q(z)<\infty\}$ is not pluripolar), 
$$V_{K,Q}(z):= \sup \{u(z):u\in L(\CC^d), \ u\leq Q \ \hbox{on} \ K\}$$
$$=\max[0,\sup\{\frac{1}{deg(p)}\log |p(z)|:p \ \hbox{polynomial}, \ ||pe^{-deg(p)\cdot Q}||_K\leq 1\}].$$
We refer to this as a {\it Siciak-Zaharjuta type} result. Standard proofs often reduce to a sufficiently regular case by regularization; i.e., convolving with a smooth bump function. 

In recent papers, a (weighted) pluripotential theory associated to a convex body $P$ in $(\RR^+)^d$ has been developed. Let $\RR^+=[0,\infty)$ and fix a convex body $P\subset (\RR^+)^d$ ($P$ is compact, convex and $P^o\not = \emptyset$). An important example is when $P$ is a non-degenerate convex polytope, i.e., the convex hull of a finite subset of $(\RR^+)^d$ with nonempty interior. Associated with $P$ we consider the finite-dimensional polynomial spaces 
$$Poly(nP):=\{p(z)=\sum_{J\in nP\cap (\ZZ^+)^d}c_J z^J: c_J \in \CC\}$$
for $n=1,2,...$ where $z^J=z_1^{j_1}\cdots z_d^{j_d}$ for $J=(j_1,...,j_d)$. For $P=\Sigma$ where 
$$\Sigma:=\{(x_1,...,x_d)\in \RR^d: 0\leq x_i \leq 1, \ \sum_{i=1}^d x_i \leq 1\},$$ we have $Poly(n\Sigma)$ is the usual space of holomorphic polynomials of degree at most $n$ in $\CC^d$. For a nonconstant polynomial $p$ we define 
\begin{equation}\label{degp} \deg_P(p)=\min\{ n\in\NN \colon p\in Poly(nP)\}.\end{equation}

We define the {\it logarithmic indicator function} of $P$ on $\CC^d$
$$H_P(z):=\sup_{ J\in P} \log |z^{ J}|:=\sup_{ J\in P} \log[|z_1|^{ j_1}\cdots |z_d|^{ j_d}].$$
Note that $H_P(z_1,...,z_d)=H_P(|z_1|,...,|z_d|)$. As in \cite{BBL}, \cite{BBLL}, \cite{BosLev}, we make the assumption on $P$ that 
\begin{equation}\label{sigmainkp} \Sigma \subset kP \ \hbox{for some} \ k\in \ZZ^+.  \end{equation}
In particular, $0\in P$. Under this hypothesis, we have
\begin{equation} \label{sigmainkp2}  H_P(z)\geq \frac{1}{k}\max_{j=1,...,d}\log^+ |z_j| \end{equation}
where $\log^+ |z_j| =\max[0,\log|z_j|]$. We use $H_P$ to define generalizations of the Lelong classes $L(\CC^d)$ and 
$$L^+(\CC^d)=\{u\in L(\CC^d): u(z)\geq \max_{j=1,...,d} \log^+ |z_j| + C_u\}$$
where $C_u$ is a constant depending on $u$. Define
$$L_P=L_P(\CC^d):= \{u\in PSH(\CC^d): u(z)- H_P(z) =0(1), \ |z| \to \infty \},$$ and 
$$L_{P,+}=L_{P,+}(\CC^d)=\{u\in L_P(\CC^d): u(z)\geq H_P(z) + C_u\}.$$
For $p\in Poly(nP), \ n\geq 1$ we have $\frac{1}{n}\log |p|\in L_P$; also each $u\in L_{P,+}$ is locally bounded in $\CC^d$. Note $L_{\Sigma} = L(\CC^d)$ and $L_{\Sigma,+} = L^+(\CC^d)$. 

Given $E\subset \CC^d$, the {\it $P-$extremal function of $E$} is given by $V^*_{P,E}(z):=\limsup_{\zeta \to z}V_{P,E}(\zeta)$ where
$$V_{P,E}(z):=\sup \{u(z):u\in L_P(\CC^d), \ u\leq 0 \ \hbox{on} \ E\}.$$
Introducing weights, let $K\subset \CC^d$ be closed and let $w:K\to \RR^+$ be a nonnegative, uppersemicontinuous function with $\{z\in K:w(z)>0\}$ nonpluripolar. Letting $Q:= -\log w$, if $K$ is unbounded, we additionally require that 
\begin{equation}\label{unbweight} \liminf_{|z|\to \infty, \ z\in K} [Q(z)- H_P(z)]=+\infty.\end{equation}
Define the {\it weighted $P-$extremal function} $$V^*_{P,K,Q}(z):=\limsup_{\zeta \to z}V_{P,K,Q}(\zeta)$$ where
$$V_{P,K,Q}(z):=\sup \{u(z):u\in L_P(\CC^d), \ u\leq Q \ \hbox{on} \ K\}. $$
If $Q=0$ we simply write $V_{P,K,Q}=V_{P,K}$ as above. For $P=\Sigma$, 
$$V_{\Sigma,K,Q}(z)=V_{K,Q}(z):= \sup \{u(z):u\in L(\CC^d), \ u\leq Q \ \hbox{on} \ K\} $$
is the usual weighed extremal function.

A version of a Siciak-Zaharjuta type result has been given in \cite{TU} in the case where it is assumed that $V_{P,K,Q}$ is continuous. Here we give a complete proof of the general version:

\begin{theorem} \label{maint}  Let $P\subset (\RR^+)^d$ be a convex body, $K\subset \CC^d$ closed, and $w=e^{-Q}$ an admissible weight on $K$. Then 
$$V_{P,K,Q} =\lim_{n\to \infty} \frac{1}{n} \log \Phi_n=\lim_{n\to \infty} \frac{1}{n} \log \Phi_{n,P,K,Q}$$
pointwise on $\CC^d$ where
\begin{equation} \label{phin} \Phi_n(z):= \sup \{|p_n(z)|: p_n\in Poly(nP),  \ \max_{\zeta \in K} |p_n(\zeta )e^{-nQ(\zeta)}|\leq 1\}.\end{equation}
If $V_{P,K,Q}$ is continuous, we have local uniform convergence on $\CC^d$.

\end{theorem}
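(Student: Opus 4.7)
The plan splits into two pointwise inequalities, followed by an upgrade to local uniform convergence when $V_{P,K,Q}$ is continuous.

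For the easy direction $\tfrac{1}{n}\log\Phi_n\leq V_{P,K,Q}$, observe that any $p_n\in Poly(nP)$ with $\|p_n e^{-nQ}\|_K\leq 1$ yields $\tfrac{1}{n}\log|p_n|\in L_P$ satisfying $\tfrac{1}{n}\log|p_n|\leq Q$ on $K$, hence $\tfrac{1}{n}\log|p_n|\leq V_{P,K,Q}$ pointwise; taking the supremum over admissible $p_n$ gives the stated bound.

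The substantive direction is $V_{P,K,Q}(z_0)\leq\liminf_n \tfrac{1}{n}\log\Phi_n(z_0)$ at every $z_0\in\CC^d$. Fix $z_0$ and a competitor $u\in L_P$ with $u\leq Q$ on $K$; the goal is, for each $\epsilon>0$ and all large $n$, to construct $p_n\in Poly(nP)$ with $\|p_n e^{-nQ}\|_K\leq 1$ and $\tfrac{1}{n}\log|p_n(z_0)|\geq u(z_0)-\epsilon$. The classical proof would smooth $u$ by convolution, but this breaks $L_P$-membership (the obstruction highlighted in the abstract). Instead, invoke Ferrier's continuous regularization, which \emph{does} preserve $L_P$, to replace $u$ by a continuous approximant $\tilde u_\epsilon\in L_P$ close to $u$ in a controlled way, matching the $H_P$-growth at infinity. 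One then reduces to the continuous-weight setting of \cite{TU}: either modify $Q$ to a continuous $Q_\epsilon$ for which $V_{P,K,Q_\epsilon}$ is continuous and decreases to $V_{P,K,Q}$, or regard $\tilde u_\epsilon$ itself as defining a continuous weighted envelope to which \cite{TU} applies. Either way this produces polynomials realizing $\tilde u_\epsilon(z_0)$ up to $o(1)$ with weighted norm on $K$ bounded by $e^{n\epsilon}$. Dividing by $e^{n\epsilon}$, letting $n\to\infty$ then $\epsilon\to 0$, and finally taking the supremum over $u$ yields the pointwise equality.

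For the local uniform convergence in the continuous case, combine the easy inequality (uniform upper bound on compacta) with the pointwise convergence to a continuous limit to run a Hartogs/Dini-type argument: the usc upper envelope $(\limsup_n \tfrac{1}{n}\log\Phi_n)^*\in L_P$ equals the continuous $V_{P,K,Q}$ everywhere, and upper-semicontinuous convergence to a continuous limit on $\CC^d$ forces local uniform convergence.

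\textbf{Main obstacle.} The technical heart is controlling Ferrier's regularization in conjunction with the weighted inequality on $K$: producing, from $u\in L_P$ with $u\leq Q$ on $K$, a continuous $\tilde u_\epsilon\in L_P$ that approximates $u$ closely enough to transfer both the $H_P$-growth at infinity and the pointwise inequality $\tilde u_\epsilon\leq Q+\epsilon$ on $K$, with $K$ possibly unbounded and $Q$ only lsc subject to \eqref{unbweight}. Once continuity is secured, the \cite{TU} theorem supplies the polynomials and the remainder is bookkeeping.
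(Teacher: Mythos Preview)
Your strategy is essentially the paper's: Ferrier regularization to stay in $L_P$, then reduction to the \cite{TU} continuous case, then approximation of the lsc weight by continuous ones. Three points need correction or sharpening.

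First, you must reduce to $K$ compact at the outset (the paper does this via $V_{P,K,Q}=V_{P,K\cap B_R,Q}$ for large $R$ using \eqref{unbweight}); without this the Dini step below fails. Second, your ``main obstacle'' is resolved in the paper by a two–stage argument that you only half articulate. Stage one assumes $Q$ \emph{continuous}: by Lemma~\ref{elem} one may take $u\in L_{P,+}$; its Ferrier regularization $u_t\in L_{P,+}\cap C(\CC^d)$ decreases to $u$, and Dini (using $K$ compact and $Q$ continuous) gives $u_t\leq Q+\epsilon$ on $K$. The key point, which is your Option~B read correctly (Remark~\ref{obvious} in the paper), is that the H\"ormander $\bar\partial$ construction underlying \cite{TU} applies verbatim to any continuous $u\in L_{P,+}$ with $u\leq Q$ on $K$, not just to $V_{P,K,Q}$; one does \emph{not} need $V_{P,K,Q}$ continuous here. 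Your Option~A, by contrast, is flawed as stated: continuity of $Q_\epsilon$ does not make $V_{P,K,Q_\epsilon}$ continuous, and the monotonicity is backwards (for lsc $Q$ one takes continuous $Q_j\uparrow Q$, whence $V_{P,K,Q_j}\uparrow V_{P,K,Q}$, not $\downarrow$). Stage two then handles general lsc $Q$ via this monotone limit together with $\tilde V_{P,K,Q_j}\leq \tilde V_{P,K,Q}$.

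Third, your local-uniform-convergence sketch has a gap: pointwise convergence of functions bounded above by a continuous limit does not by itself force local uniform convergence, absent monotonicity. The paper instead uses the submultiplicativity $\Phi_N\cdot\Phi_M\leq\Phi_{N+M}$ (as in \cite{BS}, Lemma~3.2), which supplies the missing uniformity once pointwise convergence to a continuous limit is established.
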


In the next section, we show that standard convolution does {\it not} necessarily preserve the $L_P$ classes. Thus the transition from the Siciak-Zaharjuta type result for $V_{P,K,Q}$ continuous to general $V_{P,K,Q}$ is not immediate. In section 3, we recall the Ferrier regularization procedure from \cite{F} and show that it does preserve the $L_P$ classes. Then in sections 4 and 5 we present a complete proof of Theorem \ref{maint} together with remarks on regularity of $P-$extremal functions.

\section{Approximation by convolution} We fix a standard smoothing kernel 
                   \begin{equation}\label{chi} \chi(z)=\chi(z_1,...,z_d)=\chi(|z_1|,...,|z_d|)\end{equation} with $0\leq \chi \leq 1$ and support in the unit polydisk satisfying $\int \chi dV=1$ where $dV$ is the standard volume form on $\CC^d$. Let $\chi_{1/j}(z) =j^{2d}\chi(jz)$. For which $P$ does $u\in L_P$ imply $u_j :=u*\chi_{1/j}\in L_P$ for $j$ sufficiently large? To determine this, it clearly suffices to consider $u=H_P$. Thus we write $u_j(z):=   (H_P*\chi_{1/j}) (z)$. 
                                    
                  For general $P$ we  know that $u_j \geq H_P$; $u_j \downarrow H_P$ pointwise on $\CC^d$ and uniformly on compact sets. Thus if $u_j \in L_P$ then, in fact, $u_j  \in L_{P,+}$.
                  
                  Fix $\delta >0$ so that 
                  \begin{enumerate}
                  \item for $j\geq j_0(\delta)$ we have $u_j(z)\leq H_P(z) +\delta$ if $|z_1|,..., |z_d|\leq {1 \over \delta}$ and 
                  \item $u_j(z)\leq H_P(z)+C(\delta)$ if $|z_1|,..., |z_d| \geq \delta$ for all $j$ where $C(\delta)$ depends only on $\delta$. \end{enumerate}
                  
               \noindent Property (1) follows from the local uniform convergence. For (2), 
                \begin{equation}\label{dzj} u_j(z) \leq \max_{D(z,1/j)} H_P\leq  H_P(|z_1|+1/j,...,|z_d|+1/j)\end{equation}
                  where $D(z,1/j)$ is the polydisk of polyradius $(1/j,...,1/j)$ centered at $z$. Then for $|z_k|>\delta$, 
              $$\log (|z_k|+1/j)= \log |z_k| + \log (1+ {1\over j|z_k|})$$
              $$\leq \log |z_k|+ \log (1+{1\over j\delta})\leq \log |z_k|+ \log (1+{1\over \delta})$$ and since
              $$H_P(z) = \sup_{(j_1,...,j_d)\in P} \log[|z_1|^{j_1}\cdots |z_d|^{j_d}]=\sup_{(j_1,...,j_d)\in P} \sum_{k=1}^d j_k\log |z_k|,$$
              for $|z_1|,..., |z_d| \geq \delta$ we have
              $$\max_{D(z,1/j)} H_P\leq \sup_{(j_1,...,j_d)\in P} \sum_{k=1}^d j_k\log (|z_k|+1/j)$$
              $$\leq \sup_{(j_1,...,j_d)\in P} \sum_{k=1}^d j_k\log |z_k|+ \log (1+{1\over \delta}) \cdot \sup_{(j_1,...,j_d)\in P} \sum_{k=1}^d j_k$$
              which gives (2).              
              
              For simplicity we work in $\CC^2$ with variables $(z_1,z_2)$. From the above calculations, we see that, given $\delta>0$, fixing $j\geq j_0(\delta)$, to show $u_j  \in L_P$ it suffices to show there is a constant $A(\delta)$ depending only on $\delta$ such that for $(z_1,z_2)$ with $|z_1|<\delta$ and $|z_2|>1/\delta$ and for $(z_1,z_2)$ with $|z_1|>1/\delta$ and $|z_2|<\delta$, we have 
              \begin{equation}\label{three} u_j(z_1,z_2)\leq H_P(z_1,z_2)+ A(\delta). \end{equation}
              
              \begin{proposition} \label{propA} If there exists $\delta >0$ so that $H_P(z_1,z_2)\geq H_P(\delta,z_2)$ for $|z_1|< \delta$ and $|z_2|> 1/\delta$ as well as $H_P(z_1,z_2)\geq  H_P(z_1,\delta)$ for $|z_2|< \delta$ and $|z_1|> 1/\delta$ then $u_j = H_P*\chi_{1/j} \in L_P$ for $j$ sufficiently large.\end{proposition}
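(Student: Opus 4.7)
The reduction preceding the proposition already identifies what must be checked: the estimate (\ref{three}) on the two ``mixed'' regions. By symmetry I will focus on $\{|z_1|<\delta,\,|z_2|>1/\delta\}$. My plan is to combine three ingredients: the pointwise bound $u_j(z)\leq H_P(|z_1|+1/j,|z_2|+1/j)$ from (\ref{dzj}); the coordinatewise monotonicity of $H_P$ in each $|z_k|$ (immediate since every $J\in P\subset(\RR^+)^d$ has nonnegative entries); and the hypothesis, invoked only at the last step.

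Once $j$ is taken large enough that $1/j\leq\delta$, one gets $|z_1|+1/j\leq 2\delta$ and, using $1/|z_2|<\delta$, $|z_2|+1/j\leq|z_2|(1+\delta)$. Monotonicity then upgrades the pointwise bound to $u_j(z_1,z_2)\leq H_P(2\delta,|z_2|(1+\delta))$. Expanding the right-hand side via the defining supremum and factoring out $\log 2$ and $\log(1+\delta)$ (the same manipulation that produced property (2) in the preceding discussion) yields $H_P(2\delta,|z_2|(1+\delta))\leq H_P(\delta,z_2)+M\bigl(\log 2+\log(1+\delta)\bigr)$, with $M:=\sup_{J\in P}(j_1+j_2)<\infty$ by compactness of $P$. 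The hypothesis then replaces $H_P(\delta,z_2)$ by $H_P(z_1,z_2)$ in the mixed region, producing (\ref{three}) with $A(\delta):=M(\log 2+\log(1+\delta))$. The other mixed region is handled identically using the second half of the hypothesis, and together with properties (1) and (2) this establishes $u_j\in L_P$ (indeed $u_j\in L_{P,+}$) for $j$ large.

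The only genuinely delicate point---and what I take to be the main conceptual obstacle---is recognizing that convolution pushes $|z_1|$ from below $\delta$ up to as much as $2\delta$, landing just outside the region where the hypothesis applies directly. This is precisely why the hypothesis alone does not suffice and must be coupled with monotonicity, so that the extra factor of $2$ is absorbed into the $M\log 2$ additive error rather than compared with $H_P(\delta,z_2)$ via the hypothesis itself.
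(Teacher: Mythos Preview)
Your proof is correct and follows essentially the same route as the paper: bound $u_j$ via (\ref{dzj}), split off an additive constant using $P\subset(\RR^+)^2$ to reach $H_P(\delta,z_2)+A(\delta)$, and then invoke the hypothesis. The only cosmetic difference is that the paper bounds $|z_1|+1/j\le \delta+1/j$ and absorbs the $1/j$ via $\log(1+\tfrac{1}{\delta j})\le\log(1+\tfrac{1}{\delta})$, obtaining $A(\delta)=\sup_{(j_1,j_2)\in P}[j_1\log(1+\tfrac{1}{\delta})+j_2\log(1+\delta)]$, so the ``$2\delta$ versus $\delta$'' issue you flag as the main obstacle never actually arises there.
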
 
              
       \begin{proof} We need to prove (\ref{three}); to do this it suffices to show 
              $$u_j(z_1,z_2)\leq H_P(\delta,z_2)+ A(\delta), \ |z_1|<\delta \ \hbox{and} \ |z_2|>1/\delta \eqno(A)$$
              and
              $$u_j(z_1,z_2)\leq H_P(z_1,\delta)+ A(\delta), \ |z_1|>1/\delta \ \hbox{and} \ |z_2|<\delta. \eqno(B)$$ 
             We verify (A); (B) is the same. To verify (A), for such $(z_1,z_2)$, from (\ref{dzj}), we need the appropriate upper bound on
              $$\sup_{(j_1,j_2)\in P} [j_1 \log (|z_1|+1/j)+j_2\log (|z_2|+1/j)].$$
              Now 
              $$j_1 \log (|z_1|+1/j)+j_2\log (|z_2|+1/j)\leq j_1 \log (\delta+1/j)+j_2\log (|z_2|+1/j)$$
              $$= j_1 [\log \delta+\log (1+{1\over \delta j})]+j_2[\log |z_2|+\log (1+{1\over |z_2|j})]$$
              $$\leq j_1 [\log \delta+\log (1+{1\over \delta j})]+j_2[\log |z_2|+\log (1+{\delta \over j})]$$
              $$\leq j_1 [\log \delta+\log (1+{1\over \delta })]+j_2[\log |z_2|+\log (1+\delta )].$$
              Thus
              $$u_j(z_1,z_2)\leq \sup_{(j_1,j_2)\in P} [j_1 \log (|z_1|+1/j)+j_2\log (|z_2|+1/j)]$$
              $$\leq \sup_{(j_1,j_2)\in P} \bigl(j_1 [\log \delta+\log (1+{1\over \delta })]+j_2[\log |z_2|+\log (1+\delta )]\bigr)$$
              $$\leq \sup_{(j_1,j_2)\in P}[j_1 \log \delta + j_2\log |z_2|] + A(\delta)$$
              where 
              $$A(\delta)=\sup_{(j_1,j_2)\in P}[j_1 \log (1+{1\over \delta })+j_2\log (1+\delta )].$$
              \end{proof}
              
              We call a convex body $P\subset (\RR^+)^d$ a {\it lower set} if for each $n=1,2,...$, whenever $(j_1,...,j_d) \in nP\cap (\ZZ^+)^d$ we have $(k_1,...,k_d) \in nP\cap (\ZZ^+)^d$ for all $k_l\leq j_l, \ l=1,...,d$. Clearly $H_P$ for such $P\subset  (\RR^+)^2$ satisfy the hypotheses of Proposition \ref{propA}.
              
              \begin{corollary} If $P \subset (\RR^+)^2$ is a lower set, then $H_P*\chi_{1/j} \in L_P$ for $j$ sufficiently large.
          \end{corollary}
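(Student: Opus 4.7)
The strategy is to invoke Proposition \ref{propA}, so the task reduces to verifying its geometric hypothesis that, for some $\delta > 0$, the inequality $H_P(z_1, z_2) \geq H_P(\delta, z_2)$ holds on the region $\{|z_1| < \delta,\ |z_2| > 1/\delta\}$ and the symmetric inequality holds on the reflected region.

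My first step is to translate the lattice lower-set hypothesis on $P$ into a continuous statement: $P$ itself is coordinate-wise downward closed, i.e., whenever $(j_1, j_2) \in P$ and $0 \leq (k_1, k_2) \leq (j_1, j_2)$ componentwise, then $(k_1, k_2) \in P$. Indeed, the downward closure $P' := (P - \RR^2_+) \cap \RR^2_+$ is convex, bounded, has non-empty interior, and by the hypothesis on each $nP$ satisfies $nP' \cap (\ZZ^+)^2 = nP \cap (\ZZ^+)^2$ for all $n$; by density of the scaled lattice in any convex body, $P = P'$.

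Next, I would pick any $\delta \in (0,1)$. For $|z_1| < \delta$ and $|z_2| > 1/\delta$ we have $\log|z_1| < 0 < \log|z_2|$, so the linear functional $(j_1, j_2) \mapsto j_1 \log|z_1| + j_2 \log|z_2|$ on $P$ is maximized by taking $j_1$ as small as possible and $j_2$ as large as possible. By downward closure, $j_1 = 0$ is permitted, and setting
$$j_2^{\max} := \max\{t \geq 0 : (0, t) \in P\}$$
(positive because $P^o \neq \emptyset$ and $0 \in P$), we obtain
$$H_P(z_1, z_2) = j_2^{\max} \log|z_2|,$$
which is independent of $z_1$ throughout $\{|z_1| \leq 1\}$. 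In particular $H_P(z_1, z_2) = H_P(\delta, z_2)$, giving the first hypothesis of Proposition \ref{propA} with equality; the symmetric case is identical. Proposition \ref{propA} then gives the corollary immediately.

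The only delicate point is the translation from the lattice-level lower-set definition to the continuous coordinate-wise downward closure of $P$. Once that is in place, the hypothesis of Proposition \ref{propA} is verified essentially by inspection, so this step is the one I would take most care to write out cleanly.
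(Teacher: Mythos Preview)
Your approach is exactly the paper's: reduce to Proposition~\ref{propA} by checking that a lower set $P$ satisfies its geometric hypothesis. The paper itself dispatches this in one sentence (``Clearly $H_P$ for such $P\subset(\RR^+)^2$ satisfy the hypotheses of Proposition~\ref{propA}''), so your write-up is in fact more detailed than the original.

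There is, however, a gap in your Step~1. You assert $nP'\cap(\ZZ^+)^2=nP\cap(\ZZ^+)^2$ directly from the lattice lower-set hypothesis, but for $(k_1,k_2)\in nP'\cap(\ZZ^+)^2$ the dominating point $(p_1,p_2)\in nP$ with $p_i\ge k_i$ furnished by the definition of $P'$ need not be a lattice point, so the hypothesis on $nP\cap(\ZZ^+)^2$ does not apply to it. The conclusion $P=P'$ is nevertheless correct; one clean way to close the gap is to argue by contradiction. If $P\subsetneq P'$ then (both being convex bodies) $\mathrm{int}(P')\setminus P\neq\emptyset$; pick $(y_1,y_2)$ there, and use openness of $\mathrm{int}(P')$ together with the definition of $P'$ to find $(p_1,p_2)\in\mathrm{int}(P)$ with $p_i>y_i$. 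Choosing $r,\epsilon>0$ small with $B((p_1,p_2),r)\subset P$ and $B((y_1,y_2),\epsilon)\cap P=\emptyset$, for $n$ large the balls $B(n(p_1,p_2),nr)\subset nP$ and $B(n(y_1,y_2),n\epsilon)\subset(\RR^+)^2\setminus nP$ each contain lattice points $(j_1,j_2)$ and $(k_1,k_2)$, and $j_i-k_i\ge n(p_i-y_i)-n(r+\epsilon)>0$. This contradicts the lattice lower-set property of $nP$. Once $P=P'$ is established, your Step~2 is correct as written.
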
 
          
          Indeed, it appears this condition is {\it necessary} for $H_P*\chi_{1/j} \in L_P$ as the following explicit example indicates. 
          
          \begin{example} Let $P$ be the quadrilateral with vertices $(0, 0), (1, 0), (0, 1)$ and $(1, 2)$. This $P$ is not a lower set. We show that for $\epsilon >0$ sufficiently small, $H_P*\chi_{\epsilon} \not\in L_P$. Here, 
$$H_P(z_1,z_2)=\max[ 0,\log |z_1|, \log |z_2|, \log|z_1|+2\log|z_2|].$$
Consider the regions
$$A:=\{(z_1,z_2): |z_1z_2|< 1, \ |z_2|>1\}$$
and
$$B:=\{(z_1,z_2):  |z_1z_2|>1, \  |z_2|>1\}.$$
In $A$, $H_P(z_1,z_2)=\log |z_2|$ while in $B$ we have $H_P(z_1,z_2)=\log|z_1|+2\log|z_2|=\log |z_2| +\log |z_1z_2|$. 
%Note the difference in these two right-hand-sides is 
%$$H_{AB}(z_1,z_2):=\log |z_1z_2|$$
%which is positive in region $B$. 
Fixing $\epsilon >0$, we take any large $C$. We claim we can find a point $(x_C,y_C)$ at which 
$$H_P*\chi_{\epsilon}(x_C,y_C)-H_P(x_C,y_C) >C.$$
If $0<|x|<1/|y|<1$ then $(x,y)\in A$. For such $(x,y)$, let 
$$D_{\epsilon}(x,y):=\{(z_1,z_2): |z_1-x|, |z_2-y| <\epsilon\}$$
and 
$$S_{\epsilon}(x,y):=\{(z_1,z_2)\in D_{\epsilon}(x,y): |z_1|>\epsilon/2 \}.$$
We first choose $y_C$ with $|y_C|$ sufficiently large so that for any choice of $x_C$ with $|x_C|<1/|y_C|$ -- so that $Z:=(x_C,y_C)\in A$ -- we have
$$|y_C| > \max[\frac{4}{\epsilon},  \frac{2}{\epsilon}e^{2C/ A_{\epsilon}}]+ \epsilon$$
where $A_{\epsilon} :=\int_{\tilde S_{\epsilon}}\chi_{\epsilon} dV$ and 
$$\tilde S_{\epsilon}:=  \{(z_1,z_2): \epsilon/2 \leq |z_1| \leq \epsilon, \ |z_2| < \epsilon \}.$$
Note then that $|x_C|<\epsilon/4$ and that $S_{\epsilon}(x_C,y_C)$ contains the set of points 
$$\{(z_1,z_2): \epsilon/2 \leq |z_1| \leq \epsilon, |z_2 -y_C| < \epsilon \}$$
which is a translation of $\tilde S_{\epsilon}$, centered at $(0,0)$, to $(0,y_C)$. The choice of $y_C$ insures that $S_{\epsilon}(x_C,y_C) \subset B$ and 
$$\hbox{for}  \ (\zeta_1,\zeta_2)\in S_{\epsilon}(x_C,y_C) \ \hbox{we have} \ |\zeta_1\zeta_2|>1 \ \hbox{and} \ |\zeta_2| \geq  \frac{2}{\epsilon}e^{2C/A_{\epsilon}}.$$ Writing $\zeta:=(\zeta_1,\zeta_2)$, we have
$$H_P*\chi_{\epsilon}(Z)-H_P(Z)=\int_{D_{\epsilon}(x_C,y_C)}H_P(\zeta)\chi_{\epsilon}(Z-\zeta)dV(\zeta)-H_P(Z)$$
$$=\int_{D_{\epsilon}(x_C,y_C)} [\log |\zeta_2|+ \log^+|\zeta_1\zeta_2|]\chi_{\epsilon}(Z-\zeta)dV(\zeta) -\log |y_C|$$
$$=\int_{D_{\epsilon}(x_C,y_C)} \log^+|\zeta_1\zeta_2|\chi_{\epsilon}(Z-\zeta)dV(\zeta)$$
since $\zeta \to \log |\zeta_2|$ is harmonic on $D_{\epsilon}(x_C,y_C)$ and $\int_{D_{\epsilon}}\chi_{\epsilon}(Z-\zeta)dV(\zeta)=1$. But then
$$\int_{D_{\epsilon}(x_C,y_C)} \log^+|\zeta_1\zeta_2|\chi_{\epsilon}(Z-\zeta)dV(\zeta)\geq \int_{S_{\epsilon}(x_C,y_C)} \log|\zeta_1\zeta_2|\chi_{\epsilon}(Z-\zeta)dV(\zeta)$$
$$\geq \int_{S_{\epsilon}(x_C,y_C)}\log [\frac{\epsilon}{2} \frac{2}{\epsilon}e^{2C/A_{\epsilon}}]\chi_{\epsilon}(Z-\zeta)dV(\zeta) $$
$$ = \log e^{2C/ A_{\epsilon}} \int_{S_{\epsilon}(x_C,y_C)}\chi_{\epsilon}(Z-\zeta)dV(\zeta)$$
$$ \geq \frac{2C}{A_{\epsilon}} \int_{\tilde S_{\epsilon}}\chi_{\epsilon}(\zeta)dV(\zeta) = 2C.$$
\end{example}

              We will use this standard regularization procedure in the proof of Theorem \ref{maint} but in our application we only utilize the monotonicity property $u_j \downarrow u$. In the next section, we discuss an alternate regularization procedure which always preserves $L_P$ classes and which will be needed to complete the proof of Theorem \ref{maint}.
              
              \section{Ferrier approximation} 
              
              We can do a global approximation of $u\in L_{P}$ from above by continuous $u_t \in L_{P}$ following the proof of Proposition 1.3 in \cite{Si} which itself is an adaptation of \cite{F}. 

\begin{proposition}\label{ga} Let $u\in  L_{P}$. For $t>0$, define 
\begin{equation}\label{ut} u_t(x):=-\log \bigl[\inf_{y\in \CC^d}\{e^{-u(y)}+\frac{1}{t}|y-x|\}\bigr].\end{equation}
Then for $t>0$ sufficiently small, $u_t \in L_{P}\cap C(\CC^d)$ and $u_t\downarrow u$ on $\CC^d$.
\end{proposition}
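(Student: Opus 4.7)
The plan is to follow the Ferrier--Sibony construction (Proposition 1.3 in \cite{Si}) essentially verbatim and to check that the $L_P$ growth estimate survives with $H_P$ in place of $\log|z|$. Write $\phi(y) := e^{-u(y)}$ and $M_t(x) := \inf_{y \in \CC^d}[\phi(y) + |y-x|/t]$, so that $u_t = -\log M_t$. Four ingredients need to be verified: (a) continuity of $u_t$; (b) monotone convergence $u_t \downarrow u$ as $t \downarrow 0$; (c) plurisubharmonicity of $u_t$; and (d) membership $u_t \in L_P$ for $t$ small.

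The easy items are (a) and (b). Since $x \mapsto \phi(y) + |y-x|/t$ is $1/t$-Lipschitz uniformly in $y$, the infimum $M_t$ is itself $1/t$-Lipschitz; once $M_t > 0$ is known (part of (d)) this gives continuity of $u_t$. Taking $y = x$ in the defining infimum yields $M_t(x) \leq \phi(x)$, hence $u_t \geq u$; for $0 < t' < t$ the larger penalty $|y-x|/t'$ produces a larger infimum, whence $u_{t'} \leq u_t$; and lower semicontinuity of $\phi$ together with a standard near-minimizer argument forces $M_t(x) \to \phi(x)$ as $t \downarrow 0$, so $u_t \downarrow u$.

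The main obstacle is (c). Individual slices $x \mapsto -\log(\phi(y_0) + |y_0 - x|/t)$ are \emph{not} psh---a one-variable computation shows $-\log(c + |w|)$ is superharmonic---so one cannot simply write $u_t$ as a supremum of an obvious psh family. I would instead invoke Ferrier's argument exactly as in \cite{F} and \cite{Si}: on each complex disk one verifies the sub-mean-value inequality for $-\log M_t$ by freezing a near-minimizer $y_0$ of $M_t$ at the centre, comparing to a suitable affine family of test points along the disk, and exploiting the plurisubharmonicity of $u$ together with the triangle inequality, with the factor $1/t$ crucial for absorbing error terms. Since this step is $P$-free it may be quoted from \cite{Si}.

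The $P$-specific piece is (d). One shows $M_t(x) \geq c\, e^{-H_P(x)}$ uniformly in $x$ for $t$ small enough. For $y$ with $|y-x| \geq t\, e^{-H_P(x)}$ the penalty term already exceeds $e^{-H_P(x)}$; for the remaining $y$, $|y-x|$ is small (in fact very small once $|x|$ is large, since then $e^{-H_P(x)}$ is small), and an elementary estimate in the style of the proof of Proposition \ref{propA}---namely $\log|y_k| \leq \log|x_k| + \log(1 + |y_k - x_k|/|x_k|)$---yields $H_P(y) \leq H_P(x) + A$ for a constant $A$ depending only on $P$ and $t$, so $\phi(y) \geq e^{-H_P(y) - C_u} \geq e^{-H_P(x) - A - C_u}$ from $u \leq H_P + C_u$. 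A local upper bound for psh $u$ handles bounded $x$. Together with (a)--(c) this yields $u_t \in L_P \cap C(\CC^d)$ and $u_t \downarrow u$, as claimed.
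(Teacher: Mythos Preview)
Parts (a)--(c) are handled essentially as in the paper (the paper likewise defers the plurisubharmonicity of $u_t$ to Ferrier's argument, carried out in an appendix via pseudoconvexity of $\{(s,w):|w|<e^{-u(s)}\}$ and the fact that $-\log$ of the $d_\lambda$-distance to the complement of a pseudoconvex domain is psh---not the ``freeze a near-minimizer'' mechanism you describe, but since you are content to cite \cite{Si}, \cite{F} this is not an issue).

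The gap is in (d). Your splitting into $|y-x|\geq t e^{-H_P(x)}$ and $|y-x|<t e^{-H_P(x)}$ is reasonable, and the claim $H_P(y)\leq H_P(x)+A$ on the second set is in fact true; but your justification for it is exactly the coordinate-wise estimate that the paper shows \emph{fails} for general $P$. The inequality $\log|y_k|\leq \log|x_k|+\log(1+|y_k-x_k|/|x_k|)$ is useless when some $|x_k|$ is small relative to $|y_k-x_k|$: take for instance the quadrilateral $P$ of Example~2.3 and $x=(x_1,M)$ with $|x_1|=M^{-3}$, so $H_P(x)=\log M$ and the relevant ball has radius $t/M$; then $|y_1-x_1|/|x_1|$ can be of order $tM^2$, and after the sup-splitting $\sup_{J\in P}[\cdots]\leq H_P(x)+\sup_{J\in P}\sum j_k\log(1+|y_k-x_k|/|x_k|)$ the error term is of order $\log M$, not bounded. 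This is precisely why Proposition~\ref{propA} carries extra hypotheses (amounting to $P$ a lower set) and why Example~2.3 exists.

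The paper's argument for (d) is genuinely different: it reduces to $u=H_P$, fixes $0<C<1$, and observes that if $e^{H_P(y)}>\tfrac{1}{C}e^{H_P(x)}$ then $y$ lies outside the level set $L_{C,x}=\{e^{H_P}=\tfrac{1}{C}e^{H_P(x)}\}$ while $x$ lies on $L_x=\{e^{H_P}=e^{H_P(x)}\}$, so $|y-x|\geq \mathrm{dist}(L_x,L_{C,x})$. The key geometric fact---requiring (\ref{sigmainkp})---is that this level-set distance is bounded below by a positive constant as $H_P(x)\to\infty$, which is what makes $\tfrac{1}{t}|y-x|\geq C e^{-H_P(x)}$ hold for $t$ small. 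This step uses the global shape of the $H_P$-level sets rather than any coordinate-by-coordinate comparison, and it is exactly what allows the Ferrier regularization to succeed where convolution fails.
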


\begin{proof} The continuity of $u_t$ follows from continuity of $\delta_t(x):=e^{-u_t(x)}$ which follows from the estimate
$$\delta_t(x)-\delta_t(y)=e^{-u_t(x)}-e^{-u_t(y)}\leq \frac{1}{t}|x-y|.$$
Note that $\delta_t\uparrow$ so $u_t \downarrow$. Since $\inf_{y\in \CC^d}\{e^{-u(y)}+\frac{1}{t}|y-x|\}\leq e^{-u(x)}$, we have $u_t(x)\geq u(x)$. To show that $u_t\downarrow u$ on $\CC^d$, fix $x\in \CC^d$. By adding a constant we may assume $u(x)=0$. Given $\delta >0$, we want to show there exists $t(\delta)>0$ such that $u_t(x) < \delta$ for $t< t(\delta)$. Thus we want 
\begin{equation}\label{uttou} \inf_{y\in \CC^d}\{e^{-u(y)}+\frac{1}{t}|y-x|\}> e^{-\delta} \ \hbox{for} \ t< t(\delta).\end{equation}
Since $e^{-u}$ is lowersemicontinuous and $e^{-u(x)}=1> e^{-\delta}$, we can find $\epsilon >0$ so that 
$$e^{-u(y)} > e^{-\delta} \ \hbox{for} \ |y-x|< \epsilon.$$
For such $y$, we have $e^{-u(y)}+\frac{1}{t}|y-x|> e^{-\delta}$ for any $t>0$. Choosing $t(\delta)>0$ so that 
$t(\delta) < \epsilon e^{\delta}$ achieves (\ref{uttou}).

The proof that $u_t$ is psh follows \cite{F}; for the reader's convenience we include this in an appendix. Given this, we are left to show $u_t\in L_P$ for $t>0$ sufficiently small. It clearly suffices to show this for $u=H_P$. Thus, let                          $$u_t(x):=-\log [ \inf_y \{e^{-H_P(y)}+{1\over t} |y-x|\}].$$
For $t>0$ sufficiently small, we want to show there exists $R>>1$ and $0<C<1$, both depending only on $t$ and $P$, so that for each $x\in \CC^d$ with $e^{H_P(x)}>R$ we have 
                         $$e^{-u_t(x)}\geq C e^{-H_P(x)}.$$
                         Unwinding this last inequality, we require
                         $$e^{-H_P(y)}+{1\over t} |y-x|\geq C e^{-H_P(x)} \ \hbox{for all} \ y\in \CC^d.$$
                         This is the same as
                         \begin{equation}\label{one}{Ce^{H_P(y)}-e^{H_P(x)}\over e^{H_P(x)}e^{H_P(y)}}\leq {1\over t} |y-x|  \ \hbox{for all} \ y\in \CC^d.\end{equation}
                                                  
                         Fix $x$ and fix $C$ with $0<C<1$. For any $y$ with $e^{H_P(y)}\leq {1\over C}e^{H_P(x)}$, (\ref{one}) is clearly satisfied. If $e^{H_P(y)}\geq {1\over C}e^{H_P(x)}$, since
$${Ce^{H_P(y)}-e^{H_P(x)}\over e^{H_P(x)}e^{H_P(y)}}\leq {Ce^{H_P(y)}\over e^{H_P(x)}e^{H_P(y)}}={C\over e^{H_P(x)}},$$
we would like to have
 \begin{equation}\label{two}{C\over e^{H_P(x)}}\leq {1\over t} |y-x|.\end{equation}
To estimate $|y-x|$, note that $x$ lies on the set 
$$L_x:=\{z: e^{H_P(z)}=e^{H_P(x)}\}$$
while $y$ lies outside the larger level set
$$L_{C,x}:=\{z: e^{H_P(z)}={1\over C}e^{H_P(x)}\}.$$
Thus 
$$|y-x|\geq dist(L_x,L_{C,x})$$
and it suffices, for (\ref{two}), to have
\begin{equation}\label{tree}{C\over e^{H_P(x)}}\leq {1\over t}dist(L_x,L_{C,x}). \end{equation}
Note that $L_x$ depends only on $x$ (and $P$) while $L_{C,x}$ depends only on $C$ and $x$ (and $P$). But for any fixed $C$ with $0<C<1$, $dist(L_x,L_{C,x})$ is bounded below by a positive constant as $H_P(x)\to \infty$ for a convex body $P\subset (\RR^+)^d$ satisfying (\ref{sigmainkp}). Thus taking $t>0$ sufficiently small, (\ref{tree}) will hold for all $x$ with $H_P(x)$ sufficiently large.  \end{proof} 

\begin{remark} \label{32} If $u\in  L_{P,+}$, 
there exists $c$ with $u(y)\geq c+H_P(y)$ on $\CC^d$. Hence
$$\inf_{y\in \CC^d}\{e^{-u(y)}+\frac{1}{t}|y-x|\}\leq \inf_{y\in \CC^d}\{e^{-[c+H_P(y)]}+\frac{1}{t}|y-x|\}\leq e^{-[c+H_P(x)]}$$
which gives 
$$u_t(x)\geq c+H_P(x).$$
Thus $u_t\in  L_{P,+}$
\end{remark}

We use Proposition \ref{ga} in the next sections in proving Theorem \ref{maint}.

\section{Proof of Main Result} Let $P$ be a convex body in $(\RR^+)^d$ satisfying (\ref{sigmainkp2}). As in the case $P=\Sigma$, for $K$ unbounded and $Q$ satisfying (\ref{unbweight}), $V_{P,K,Q}=V_{P,K\cap B_R,Q}$ for $B_R:=\{z: |z|\leq R\}$ with $R$ sufficiently large (cf., \cite{BBL}). Thus in proving Theorem \ref{maint} we may assume $K$ is compact. Theorem 2.10 in \cite{TU} states a Siciak-Zaharjuta theorem for $K,Q$ such that $V_{P,K,Q}$ is continuous (without assuming $Q$ is continuous):

\begin{theorem}\label{SZT} Let $K$ be compact and $Q$ be an admissible weight function on $K$ such that $V_{P,K,Q}$ is continuous. Then $V_{P,K,Q}=\tilde V_{P,K,Q}$ where 
$$\tilde V_{P,K,Q}(z):=\lim_{n\to \infty} [\sup \{\frac{1}{N} \log |p(z)|: p\in Poly(NP), \ ||pe^{-NQ}||_K\leq 1\}]$$
with local uniform convergence in $\CC^d$.

\end{theorem}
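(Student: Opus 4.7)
The plan is to prove $V_{P,K,Q} = \tilde V_{P,K,Q}$ as two separate inequalities and then upgrade from pointwise to locally uniform convergence by a Dini-type argument that exploits the hypothesized continuity of $V_{P,K,Q}$.

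The easy inequality $\tilde V_{P,K,Q} \leq V_{P,K,Q}$ runs as follows: for any competitor $p \in Poly(NP)$ with $\|pe^{-NQ}\|_K \leq 1$, the function $\frac{1}{N}\log|p|$ lies in $L_P$ (since every element of $Poly(NP)$ satisfies $|p(z)| \leq C e^{NH_P(z)}$ for $|z|$ large, directly by the definition of $H_P$) and is bounded above by $Q$ on the nonpluripolar subset $\{Q < \infty\} \cap K$ of $K$. Standard pluripotential considerations on negligibility of pluripolar exceptional sets promote this to $\frac{1}{N}\log|p| \leq V_{P,K,Q}$ on all of $\CC^d$, and taking the supremum over admissible competitors gives the bound.

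The hard direction $V_{P,K,Q} \leq \tilde V_{P,K,Q}$ would be proved via H\"ormander's $L^2$-estimate for $\bar\partial$. Fix $z_0 \in \CC^d$ and $\epsilon > 0$; the aim is to produce, for each large $N$, a polynomial $p_N \in Poly(N'P)$ with $N' = N + O(1)$ satisfying $\|p_N e^{-N'Q}\|_K \leq 1$ and $|p_N(z_0)| \geq c\, e^{N(V_{P,K,Q}(z_0) - \epsilon)}$. Write $V := V_{P,K,Q}$ and let $k$ be the integer from (\ref{sigmainkp}). Consider the plurisubharmonic weight
$$\phi_N(z) := 2NV(z) + 2k(d+1)H_P(z) + 2d\log|z-z_0|,$$
whose logarithmic pole forces any $L^2$-integrable solution to vanish at $z_0$. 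Take a smooth cutoff $\chi$, identically $1$ near $z_0$ and supported in a ball $B(z_0,r)$ on which $V$ varies by less than $\epsilon$, and solve $\bar\partial u_N = \bar\partial(e^{NV(z_0)}\chi)$ with H\"ormander's estimate. Then $f_N := e^{NV(z_0)}\chi - u_N$ is entire on $\CC^d$ with $f_N(z_0) = e^{NV(z_0)}$, and the $L^2$-bound (of order $e^{2N\epsilon}$ by continuity of $V$ on the support of $\bar\partial\chi$) converts via the submean-value inequality to a pointwise bound $|f_N(z)| \leq C e^{NV(z) + k(d+1)H_P(z) + N\epsilon + O(1)}$. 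Since $V \in L_P$ gives $V \leq H_P + O(1)$, this becomes $|f_N(z)| \leq C' e^{(N + k(d+1))H_P(z) + N\epsilon}$, and a $P$-Liouville argument then forces $f_N \in Poly(N'P)$ with $N' := N + k(d+1)$. On the compact $K$, $V \leq Q$ together with boundedness of $H_P$ gives $\|f_N e^{-N'Q}\|_K \leq C e^{N\epsilon}$; rescaling yields the sought $p_N$, and sending $N \to \infty$ then $\epsilon \to 0$ finishes this direction.

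For local uniform convergence, the supermultiplicativity $\Phi_{N+M}(z) \geq \Phi_N(z)\Phi_M(z)$, a consequence of $pq \in Poly((N+M)P)$ and multiplicativity of the weighted sup norm, makes $\log \Phi_N$ superadditive, so Fekete's lemma gives pointwise convergence of $\frac{1}{N}\log \Phi_N$ to $\sup_N \frac{1}{N}\log \Phi_N$. The subsequence $\tilde V_{n!}$ is monotonically increasing (since $p \in Poly(n!P)$ admissible yields $p^{n+1} \in Poly((n+1)!P)$ with preserved weighted norm), consists of plurisubharmonic upper semicontinuous functions, and increases pointwise to the continuous function $V_{P,K,Q}$; Dini's theorem then yields locally uniform convergence along this subsequence, which extends to the full sequence by a sandwich comparison. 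The principal technical obstacle lies in the $P$-Liouville step: proving that an entire function $f$ on $\CC^d$ with $|f(z)| \leq C e^{N'H_P(z)}$ belongs to $Poly(N'P)$. This follows from Cauchy integral estimates on polytori $\{|z_k|=r_k\}$ combined with the support-function description of $H_P$: for each multi-index $J$, the bound $|c_J| \leq C\prod_k r_k^{-j_k} e^{N'H_P(r_1,\ldots,r_d)}$ optimized over $(r_k)$ forces $c_J = 0$ whenever $J \notin N'P$.
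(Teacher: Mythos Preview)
Your overall strategy matches the paper's: both prove the hard inequality by solving $\bar\partial$ with a psh weight carrying a logarithmic pole at $z_0$, then showing the resulting entire $f_N$ is a $P$-polynomial with the right normalization. Two differences of detail: (i) the paper builds its weight via Lemma~\ref{helpy} as $(N-d/\kappa)(V_t-\epsilon/2)+\tfrac{d}{\kappa}\log|z^p|+d\max_j\log|z_j-z_{0,j}|$, tuned so that the $L^2$ estimate places $f_N$ \emph{directly} in $Poly(NP)$ through the $L^2$-Liouville Proposition~\ref{poly}, whereas your simpler weight $NV+k(d+1)H_P+d\log|z-z_0|$ only yields $f_N\in Poly(N'P)$ with $N'=N+O(1)$, which is harmless after dividing by $N'$ and letting $N\to\infty$; (ii) the paper first smooths $V$ to $V_t$ by convolution and extracts a subsequential limit as $t\to 0$, while you work with the continuous $V$ directly. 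Your supermultiplicativity/Dini argument for local uniform convergence is essentially the paper's reference to \cite{BS}.

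There is, however, a genuine gap in your passage from the global $L^2$ bound to the global pointwise bound $|f_N(z)|\leq Ce^{NV(z)+k(d+1)H_P(z)+N\epsilon+O(1)}$ via the submean-value inequality. That step needs $\sup_{B(z,1)}\phi_N\leq\phi_N(z)+O(1)$ uniformly in $z\in\CC^d$, hence in particular $\sup_{B(z,1)}H_P\leq H_P(z)+O(1)$; but this is precisely the estimate that Section~2 of the paper shows can \emph{fail} when $P$ is not a lower set (it is equivalent to $H_P*\chi_\epsilon\in L_P$). The paper never seeks a global pointwise bound: it remains in $L^2$ for the Liouville step (Proposition~\ref{poly}) and only converts to a pointwise estimate on the compact set $K$ in Step~2, where all weights are bounded. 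Your argument is repaired the same way: from your $L^2$ estimate deduce $\int|f_N|^2e^{-2N'H_P}(1+|z|^2)^{-\epsilon}\,dV<\infty$ directly (using $V\leq H_P+O(1)$ and $d\log|z-z_0|\leq dkH_P+O(1)$, at the price of enlarging $N'$ by another $dk$), invoke Proposition~\ref{poly}, and apply submean value only over a neighborhood of $K$.
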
 

\begin{remark} \label{nm} The fact that the limit $\tilde V_{P,K,Q}(z):=\lim_{n\to \infty} \Phi_n(z)$ exists pointwise follows from the observation that $\Phi_n \cdot \Phi_m \leq \Phi_{n+m}$ (here we are using the notation from (\ref{phin})). Convexity of $P$ is crucial as this property implies that 
$$Poly(nP)\cdot Poly(mP) \subset Poly\bigl((n+m)P\bigr).$$
Note we can also write 
$$\tilde V_{P,K,Q}(z)=\sup\{\frac{1}{deg_P(p)}\log |p(z)|: p \ \hbox{polynomial}, \ ||pe^{-deg_P(p)Q}||_K\leq 1\}.$$
where $deg_P(p)$ is defined in (\ref{degp}) and clearly $V_{P,K,Q}\geq \tilde V_{P,K,Q}$.  
\end{remark}

The proof of Theorem \ref{SZT} is given in \cite{TU}, Theorem 2.10 but the proof is omitted from the final version \cite{Bay}. Here we provide complete details including a proof of the following version of Proposition 2.9 from \cite{Bay}, \cite{TU} which is stated but not proved in these references. Below $dV$ is the standard volume form on $\CC^d$.

\begin{proposition}\label{poly}
Let $P\subset(\Bbb{R}^+)^d$ be a convex polytope and let $f\in \mathcal{O}(\Bbb{C}^d)$ such that
\begin{equation}\label{norm}
\int_{\CC^d}|f(z)|^2e^{-2NH_P(z)}(1+|z|^2)^{-\epsilon}dV(z)<\infty
\end{equation} for some $\epsilon \geq 0$ sufficiently small. Then $f\in Poly(NP).$ 
\end{proposition}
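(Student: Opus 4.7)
The plan is to convert the $L^2$ hypothesis into a pointwise upper bound on $|f|$ via the sub-mean value inequality, and then to apply Cauchy's inequality along carefully chosen rays in log-modulus coordinates $(\log|z_1|,\ldots,\log|z_d|)$ to force the Taylor coefficients $c_J$ with $J\notin NP$ to vanish. Since $NP\cap(\ZZ^+)^d$ is finite, this will give $f\in Poly(NP)$.

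First I would establish a pointwise bound of the form $|f(z)|^2 \leq C\, e^{2NH_P(z)}(1+|z|^2)^{\epsilon}$. Since $|f|^2$ is plurisubharmonic, the sub-mean value inequality on the unit polydisk $D(z,1)$ gives $|f(z)|^2 \leq \pi^{-d}\int_{D(z,1)}|f|^2\,dV$. The oscillation of $H_P$ on $D(z,1)$ is controlled by a constant depending only on $P$ (using $\log(|z_k|+1)-\log|z_k|\leq \log 2$ for $|z_k|\geq 1$, together with $H_P\geq 0$ coming from $0\in P$), and similarly $(1+|\zeta|^2)^{\epsilon}$ is comparable to $(1+|z|^2)^{\epsilon}$ on $D(z,1)$; these sup factors can then be pulled outside the integral at the cost of a universal constant, leaving the finite hypothesis integral on the right.

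I would then expand $f(z)=\sum_{J\in(\ZZ^+)^d}c_J z^J$ and apply Cauchy's inequality on the polycircle $\{|z_k|=r_k\}$ to obtain $|c_J|\, r^J \leq C^{1/2}e^{NH_P(r)}(1+|r|^2)^{\epsilon/2}$ for every $r\in(\RR^+)^d$. Fix $J\in(\ZZ^+)^d\setminus NP$. By the projection theorem for convex sets there exists a unit vector $\xi\in\RR^d$ with
$$\eta(\xi):=J\cdot\xi-N\sigma_P(\xi)=\mathrm{dist}(J,NP)>0,\qquad \sigma_P(\xi):=\sup_{\alpha\in P}\alpha\cdot\xi.$$
Because $P\subset(\RR^+)^d$ and $J\geq 0$, a $\xi$ with all $\xi_k\leq 0$ would force $\sigma_P(\xi)=0$ and $J\cdot\xi\leq 0$, contradicting $\eta(\xi)>0$; hence $m_\xi:=\max_k\xi_k>0$. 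Substituting $r_k=e^{t\xi_k}$ and using $H_P(r)=t\sigma_P(\xi)$ together with $(1+|r|^2)^{\epsilon/2}\leq C' e^{t\epsilon m_\xi}$ for large $t$ yields
$$|c_J|\leq C''\,e^{-t[\eta(\xi)-\epsilon m_\xi]}.$$

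The final point, and the main technical obstacle, is to ensure a single small $\epsilon$ handles every $J\notin NP$ at once. The key observation is that $(\ZZ^+)^d\setminus NP$ is discrete and any compact neighborhood of the bounded set $NP$ contains only finitely many lattice points, so
$$d_0:=\inf\bigl\{\mathrm{dist}(J,NP):J\in(\ZZ^+)^d\setminus NP\bigr\}>0.$$
Since $\eta(\xi)\geq d_0$ and $m_\xi\leq \|\xi\|_2=1$, any $0\leq \epsilon<d_0$ makes the exponent $\eta(\xi)-\epsilon m_\xi$ strictly positive; letting $t\to\infty$ then forces $c_J=0$, so only finitely many coefficients survive, those with $J\in NP\cap(\ZZ^+)^d$, giving $f\in Poly(NP)$. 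The other place where care is required is the bookkeeping in the growth estimate, so that the exponent of $(1+|z|^2)$ in the pointwise bound is exactly the $\epsilon$ appearing in the hypothesis and not something larger that would spoil the uniformity in $J$.
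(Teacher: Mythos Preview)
Your argument has a genuine gap in Step 1. The claimed pointwise bound $|f(z)|^2 \leq C\,e^{2NH_P(z)}(1+|z|^2)^{\epsilon}$ via the sub-mean value inequality on the unit polydisk $D(z,1)$ requires that $\sup_{\zeta\in D(z,1)} H_P(\zeta) \leq H_P(z) + C'$ for a constant $C'$ depending only on $P$. This is false in general: it is precisely the phenomenon analyzed in Section~2 of the paper. For the quadrilateral $P$ with vertices $(0,0),(1,0),(0,1),(1,2)$ (Example~2.3), at points $z=(z_1,z_2)$ with $|z_1|$ small and $|z_2|$ large one has $H_P(z)=\log|z_2|$, whereas at nearby $\zeta$ with $|\zeta_1|\approx 1/2$ one has $H_P(\zeta)\geq \log|\zeta_1|+2\log|\zeta_2|\approx 2\log|z_2|-\log 2$; the difference is unbounded. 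Your justification (``$\log(|z_k|+1)-\log|z_k|\leq\log 2$ for $|z_k|\geq 1$, together with $H_P\geq 0$'') only covers the region where all $|z_k|\geq 1$.

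This gap cannot be sidestepped by restricting to the tori you actually use. For $J=(0,2)\notin P$ in the same example, every separating direction $\xi$ must have $\xi_1<0$: the inequality $2\xi_2>h_P(\xi)\geq \xi_1+2\xi_2$ forces $\xi_1<0$ (and $2\xi_2>\xi_2$ forces $\xi_2>0$). Hence $r_1=e^{t\xi_1}\to 0$ while $r_2=e^{t\xi_2}\to\infty$, and the tori you need sweep exactly through the region where the sub-mean value bound breaks down. The paper's proof avoids any pointwise estimate: it applies Cauchy--Schwarz directly on each torus $S_{tr_j}$ (with $r_j$ a facet normal of $P$) to bound $|a_J|^2$ by the weighted $L^2$ integral over that torus, then integrates in the radial parameter to recover the full hypothesis integral; the case where the relevant facet normal has all nonpositive components is handled separately via orthogonality of the monomials with respect to the circularly symmetric weight.
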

\begin{proof}
Since $P$ is a convex polytope it is given by
$$P=\{x\in \Bbb{R}^d: \ell_j(x)\leq 0,\ j=1,\dots,k\}$$ where 
$$\ell_j(x):=\langle x,r_j\rangle-\alpha_j.$$ Here $r_j=(r_j^1,...,r_j^d)\in \RR^d$ is the primitive outward normal to the $j-$th codimension one face of $P$; $\alpha_j\in \Bbb{R}$; and $<\cdot,\cdot>$ is the standard inner product on $\RR^d$. Recall that the \textit{support function} $h_P:\Bbb{R}^d\to\Bbb{R}$ of $P$ is given by
$$h_P(x)=\sup_{p\in P}\langle x,p\rangle.$$
Fix an index $J\in \Bbb{Z}_+^d\setminus NP$. Replacing $P$ with $NP$ above, this means that
$$h_{NP}(r_j)\leq N\alpha_j<\langle J,r_j\rangle$$ for some $j$. Define $Log:\T\to \Bbb{R}^d$ via
$$Log(z):=(\log|z_1|,\dots,\log|z_d|).$$
The pre-image of $r_j\in \Bbb{R}^d$ under $Log$ is the complex $d$-torus $$\mathcal{S}_{r_j}:=\{|z_1|=e^{r_j^1}\}\times\dots\times\{|z_d|=e^{r_j^d}\}.$$ We conclude that
\begin{equation}\label{ineq}
NH_P(z)=h_{NP}(r_j)<\langle J, Log(z)\rangle
\end{equation} for every $z\in \mathcal{S}_{r_j}$. Clearly, the above inequality is true for every positive multiple of $r_j$ and hence on the set of tori $\mathcal{S}_{tr_j}$ for $t>0$. 

Write $f(z)=\sum_{L\in (\ZZ^+)^d} a_L z^L$. By the Cauchy integral formula
$$a_J=\frac{1}{(2\pi i)^d}\int_{S_{tr_j}}\frac{f(\zeta)}{\zeta^{(J+I)}}d\zeta$$ where
$z^J=z_1^{j_1}\dots z_d^{j_d}$ and $I=(1,\dots,1)$. We want to show that $a_J=0$ for $J\in \Bbb{Z}_+^d\setminus NP$. We write $d|\zeta|=\prod_ie^{tr_j^i}d\theta_i$.
Then by the Cauchy-Schwarz inequality and (\ref{ineq}) we have
\begin{eqnarray*}
|a_J| &\leq&  \frac{1}{(2\pi)^d} (\int_{S_{tr_j}}\frac{|f(\zeta)|^2e^{-2NH_P(\zeta)}}{(1+|\zeta|^2)^{\epsilon}}d|\zeta|)^{\frac12} \big(\int_{S_{tr_j}}\frac{e^{2NH_P(\zeta)}}{|\zeta^{2(J+I)}|}(1+|\zeta|^2)^{\epsilon}d|\zeta|\big)^{\frac12}\\
&\leq&  \frac{1}{(2\pi)^d} (\int_{S_{tr_j}}\frac{|f(\zeta)|^2e^{-2NH_P(\zeta)}}{(1+|\zeta|^2)^{\epsilon}}d|\zeta|)^{\frac12} \big(\int_{S_{tr_j}}\frac{(1+|\zeta|^2)^{\epsilon}}{|\zeta^{2I}|}d|\zeta|\big)^{\frac12}.
%&\leq&(2\pi)^{-\frac{m}{2}}\prod_{i=1}^m(exp(-(\frac12+\epsilon)nr_j^i)\frac{1}{(2\pi)^m} (\int_{S_{nr_j}}|f(\zeta)|^2e^{-2NH_P(\zeta)}(1+|\zeta|^2)^{-\epsilon}d|\zeta|)^{\frac12}.
\end{eqnarray*}
Thus,
$$\frac{\prod_{i=1}^d\exp(tr_j^i)}{(1+\sum_{i=1}^d\exp(2tr_j^i))^{\epsilon}} |a_J| ^2\leq  (2\pi)^{-d}\int_{S_{tr_j}}|f(\zeta)|^2e^{-2NH_P(\zeta)}(1+|\zeta|^2)^{-\epsilon}d|\zeta|.$$

Note that some components $r_j^i$ of $r_j$ could be negative and some could be nonnegative; e.g., for $\Sigma$ we have $r_j=(0,...0,-1,0,...,0)=-{\bf e}_j$ for $j=1,...,d$ and $r_{d+1}=(1/d,...,1/d)$. Writing $\zeta_i=\rho_ie^{i\theta}$ where $\rho_i:=e^{tr_j^i}$, the above inequality becomes
\begin{equation}\label{integ} \frac{\prod_{i=1}^d\rho_i}{(1+\sum_{i=1}^d\rho_i^2)^{\epsilon}} |a_J| ^2\leq  (2\pi)^{-d}\int_{|\zeta_1|=\rho_i}\cdots \int_{|\zeta_d|=\rho_d}\frac{|f(\zeta)|^2e^{-2NH_P(\zeta)}}{(1+|\zeta|^2)^{\epsilon}}\prod_i\rho_i d\theta_i.\end{equation}
From (\ref{sigmainkp}), $P$ contains a neighborhood of the origin and hence for some $i\in \{1,\dots,d\}$ we have $r_j^i\geq 0$; i.e., we cannot have all $r_j^i<0$. 

{\bf Case 1}: There is some $i\in \{1,...,d\}$ for which $r_j^i>0$. Then for each $i=1,\dots, d$, we integrate both sides of (\ref{integ}) over 
\[\begin{cases}
 1\leq \rho_i \leq T\ \text{if}\ r_j^i\geq 0 \\
 1/T \leq \rho_i \leq 1  \ \text{if}\ r_j^i< 0
 \end{cases}
\] and letting $T\to \infty$ we see that $a_J=0$. 

 {\bf Case 2:} $r_j^i\leq 0$ for every $i=1,\dots,d.$ Note that since $r_j\not=\vec{0}$ there is an $i$ such that $r_j^i<0$.
Since $P$ is a convex polytope this implies that 
$$\int_{\CC^d}|z^J|^2e^{-2NH_P(z)}(1+|z|^2)^{-\epsilon}dV(z)=\infty.$$ On the other hand, since $H_P(z_1,...,z_d)=H_P(|z_1|,...,|z_d|)$, the monomials $a_Lz^L$ occurring in $f$ are orthogonal with respect to the weighted $L^2-$norm in (\ref{norm}). Hence for each such $L$ we have
$$|a_L|^2\int_{\CC^d}|z^L|^2e^{-2NH_P(z)}(1+|z|^2)^{-\epsilon}dV(z)$$
$$\leq \int_{\CC^d}|f(z)|^2e^{-2NH_P(z)}(1+|z|^2)^{-\epsilon}dV(z)<\infty$$
from which we conclude that $a_J=0$.

\end{proof}

\begin{remark} Clearly if $P$ is a convex body in $(\RR^+)^d$ and $f\in \mathcal{O}(\Bbb{C}^d)$ satisfies (\ref{norm}) then for any convex polytope $P'$ containing $P$, $f$ satisfies (\ref{norm}) with $P'$ so that $f\in Poly(NP')$.\end{remark}

We will use the following version of H\"ormander's $L^2$-estimate (\cite[Theorem 6.9]{Dem} on page 379) for a solution of the $\bar \partial$ equation:

\begin{theorem}\label{dbar}
Let $\Omega\subset \CC^d$  be a pseudoconvex open subset and let $\varphi$ be a psh function on $\Omega$. For every $r \in (0,1]$ and every $(0,1)$ form $g=\sum_{j=1}^dg_jd\bar z_j$ with $g_j\in L_{p,q}^2(\Omega,\text{loc}), \ j=1,...,d$ such that $\bar \partial g=0$ and
$$\int_{\Omega}|g|^2e^{-\varphi}(1+|z|^2)dV(z)<\infty$$ 
where $|g|^2:=\sum_{j=1}^d |g_j|^2$ there exists $f\in L^2(\Omega,\text{loc})$ such that $\bar \partial f=g$ and 
$$\int_{\Omega}|f|^2e^{-\varphi}(1+|z|^2)^{-r} dV(z)\leq \frac{4}{r^2}\int_{\Omega}|g|^2e^{-\varphi}(1+|z|^2)dV(z)<\infty.$$
Moreover, we can take $f$ to be smooth if $g$ and $\varphi$ are smooth.
\end{theorem}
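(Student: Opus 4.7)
The plan is to prove this weighted $L^2$-estimate by Hörmander's classical method as carried out in Demailly: reduce to smooth data on a smoothly bounded strictly pseudoconvex domain, derive an a priori estimate from a twisted Bochner-Kodaira-Nakano (BKN) inequality, and extract the solution $f$ by Hahn-Banach and Riesz representation.

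For the reduction, I would exhaust $\Omega$ by smoothly bounded strictly pseudoconvex subdomains $\Omega_j\uparrow\Omega$ and regularize $\varphi$ by a decreasing sequence of smooth psh functions $\varphi_k$. Establishing the estimate with the same constant $4/r^2$ in each smooth, relatively compact setting and then invoking weak compactness in the weighted $L^2$ spaces produces the required $f$ on all of $\Omega$; interior hypoellipticity of $\bar\partial$ gives the smoothness claim when $g$ and $\varphi$ are smooth.

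For the a priori estimate, set $\tilde\varphi(z) := \varphi(z) + r\log(1+|z|^2)$. This is psh because the Fubini-Study form $i\partial\bar\partial\log(1+|z|^2)$ is positive, and a direct computation shows its smallest eigenvalue is $(1+|z|^2)^{-2}$. View $T=\bar\partial$ as a densely defined closed operator $L^2(\Omega,e^{-\tilde\varphi}\,dV)\to L^2_{(0,1)}(\Omega,e^{-\tilde\varphi}\,dV)$ and apply the twisted BKN identity with multiplier $\eta(z) := (1+|z|^2)^{1+r}$. For compactly supported smooth $(0,1)$-forms $v$ annihilated by $\bar\partial$, combining the curvature lower bound $i\partial\bar\partial\tilde\varphi \geq r\,i\partial\bar\partial\log(1+|z|^2)$ with a Cauchy-Schwarz split designed to absorb $\eta$ gives
$$\left|\int_\Omega\langle g,v\rangle\,e^{-\varphi}\,dV\right|^2 \leq \frac{4}{r^2}\left(\int_\Omega|g|^2 e^{-\varphi}(1+|z|^2)\,dV\right)\left(\int_\Omega|T^*v|^2 e^{-\varphi}(1+|z|^2)^{-r}\,dV\right).$$

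Existence of $f$ then follows by a standard Hahn-Banach argument: the map $T^*v \mapsto \int_\Omega\langle g,v\rangle\,e^{-\varphi}\,dV$ is a continuous linear functional on the image of $T^*$, bounded (after the estimate above) by $\tfrac{2}{r}$ times the weighted norm of $g$, so it extends to all of $L^2(\Omega,e^{-\tilde\varphi}\,dV)$ and is represented by an $f$ with $\bar\partial f = g$ and the desired norm bound. The main technical obstacle is producing the precise constant $4/r^2$: this forces both the shifted weight $\tilde\varphi$, which injects strict positivity into the Levi form, and the multiplier $\eta=(1+|z|^2)^{1+r}$, which converts between the two polynomial weight factors, and is what necessitates the twisted form of BKN rather than a black-box appeal to the standard Hörmander estimate.
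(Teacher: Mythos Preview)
The paper does not prove this theorem: it is quoted as \cite[Theorem~6.9]{Dem} (Demailly, p.~379) and used as a black box in the proof of Theorem~\ref{SZT}. There is no proof in the paper to compare your attempt against.

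Your outline is the standard H\"ormander scheme and is correct in spirit. One caution: the step you pass over in a sentence is precisely the one that fixes the constant $4/r^2$. You correctly note that the smallest eigenvalue of $i\partial\bar\partial\log(1+|z|^2)$ is $(1+|z|^2)^{-2}$, but if you feed the weight $\tilde\varphi=\varphi+r\log(1+|z|^2)$ into the \emph{untwisted} H\"ormander estimate with curvature lower bound $r(1+|z|^2)^{-2}$, the right-hand side carries a factor $(1+|z|^2)^{2-r}$ rather than $(1+|z|^2)$. So your twist $\eta=(1+|z|^2)^{1+r}$ and the twisted Bochner--Kodaira identity are doing real work, and the $\partial\eta$ terms together with the Cauchy--Schwarz split must be written out explicitly to confirm that $4/r^2$ (and the exponent $1$ on the right) actually emerge. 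Since the result is cited, that computation is in Demailly; but as written your sketch asserts the outcome of the key inequality rather than verifying it.
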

 
Finally we will use the following result, which is Lemma 2.2 in \cite{Bay}.
\begin{lemma}\label{helpy}
Let $P$ be a convex body  in $(\RR^+)^d$ and $\psi\in L_{P,+}(\CC^d).$ Then for every $p\in P^{\circ}$ there exists $\kappa,C_{\psi}>0$ such that 
$$\psi(z)\geq \kappa\max_{j=1,\dots,d}\log|z_j|+\log|z^p|-C_{\psi} \ \ \text{for every}\ z\in\T.$$
\end{lemma}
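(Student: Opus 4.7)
The plan is to reduce the statement to one about the logarithmic indicator $H_P$ and then exploit the fact that $p$ lies strictly inside $P$. Since $\psi\in L_{P,+}$, by definition there is a constant $C'$ (depending on $\psi$) such that $\psi(z)\geq H_P(z)+C'$ on all of $\CC^d$. It therefore suffices to prove the pointwise bound
\[
H_P(z)\;\geq\;\kappa\max_{j=1,\dots,d}\log|z_j|+\log|z^p|\qquad\text{for every }z\in\T,
\]
and then absorb $-C'$ into a suitably large positive constant $C_\psi$.

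The key geometric observation is that $P\subset(\RR^+)^d$ and $p\in P^{\circ}$ force $p$ to have all coordinates strictly positive, and that some Euclidean ball $B(p,\rho)\subset P$. In particular, choosing any $\kappa\in(0,\rho)$, every translate $p+\kappa e_k$ (with $e_1,\dots,e_d$ the standard basis of $\RR^d$) lies in $P$. This $\kappa$ depends only on $p$ and $P$.

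Next, I would fix $z\in\T$, write $M(z):=\max_{j}\log|z_j|$, and split into two cases. If $M(z)\geq 0$, choose an index $k$ with $\log|z_k|=M(z)$; then $J:=p+\kappa e_k\in P$, so
\[
H_P(z)\;\geq\;\log|z^{J}|\;=\;\log|z^{p}|+\kappa\log|z_k|\;=\;\log|z^{p}|+\kappa M(z).
\]
If instead $M(z)<0$, then $\kappa M(z)<0$, and simply using $p\in P$ gives
\[
H_P(z)\;\geq\;\log|z^{p}|\;\geq\;\log|z^{p}|+\kappa M(z).
\]
Combining both cases yields the desired inequality, and the lemma follows.

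There is no real obstacle here; the only substantive point is the geometric step of choosing $\kappa$ using $p\in P^{\circ}$ so that $p+\kappa e_k\in P$ for every $k$. The rest is the elementary case analysis forced by the sign of $M(z)$, needed because the claim is stated with a single positive coefficient $\kappa$ in front of $\max_j\log|z_j|$ even when that maximum is negative.
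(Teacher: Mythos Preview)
Your proof is correct. Both you and the paper first reduce to the inequality $H_P(z)\geq \kappa\max_j\log|z_j|+\log|z^p|$ using $\psi\geq H_P+C'$, and both exploit a ball $B(p,\kappa)\subset P^{\circ}$. The difference lies in how that ball is used. The paper argues via the support function: writing $H_P(z)=h_P(\Log z)$ on $\T$, the inclusion $B(p,\kappa)\subset P$ gives $h_P(x)\geq\sup_{q\in B(p,\kappa)}\langle q,x\rangle=\kappa\|x\|+\langle p,x\rangle$, which immediately yields the stronger bound $H_P(z)\geq\kappa\max_j|\log|z_j||+\log|z^p|$ with no case analysis. Your argument is more hands-on: you sample only the finitely many points $p$ and $p+\kappa e_k$ from $P$ and compensate with a dichotomy on the sign of $\max_j\log|z_j|$. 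You lose the absolute value around $\log|z_j|$ (harmless for the stated lemma and for its use in the proof of Theorem~\ref{SZT}), but you avoid any appeal to support functions or duality, which makes the argument entirely elementary.
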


\begin{proof}[Proof of Theorem \ref{SZT}]
From Remark \ref{nm}, given $z_0\in \CC^d$ and $\epsilon >0$ we want to find $N$ large and $p_N\in Poly(NP)$ with 
$$\frac{1}{N} \log |p_N(z)|\leq Q(z), \ z\in K$$
and
$$\frac{1}{N} \log |p_N(z_0)|> V_{P,K,Q}(z_0) -\epsilon.$$

\medskip

\noindent {\bf STEP 1}: We write $V:=V_{P,K,Q}$. Since $V$ is continuous, we can fix $\delta >0$ so that 
$$V(z)>V(z_0)-\epsilon/2 \ \hbox{if} \ z\in B(z_0,\delta).$$
If $V$ is not smooth on $\CC^d$ then we approximate $V$ by smooth psh functions $V_{t}:=\chi_t * V\geq V$ on $\CC^d$ with $\chi$ as in (\ref{chi}). Since $V$ is continuous, $V_{t}$ converges to $V$ locally uniformly as $t\to0.$ 

Let $\eta$ be a test function with compact support in $B(z_0,\delta)=\{z:|z-z_0|<\delta\}$ such that $\eta\equiv1$ on $B(z_0,\frac{\delta}{2}).$ For a fixed point $p\in P^{\circ},$ we define  
$$\psi_{N,t}(z):=(N-\frac{d}{\kappa})(V_t(z)-\frac{\epsilon}{2})+\frac{d}{\kappa}\log|z^p|+d\max_{j=1,\dots,d}\log|z_j-z_{0,j}|$$ where $\kappa>0$ is as in Lemma \ref{helpy} and $\frac{d}{\kappa}\ll N.$ Note that $\psi_{N,t}$ is psh on $\CC^d,$ and smooth away from $z_0$. Applying Theorem \ref{dbar} with the weight function $\psi_{N,t}$, for every $r\in(0,1]$ there exists a smooth function $u_{N,t}$ on $\CC^d$ such that $\bar{\partial}u_{N,t}=\bar{\partial}\eta$ and  
\begin{equation} \label{new43} \int_{\CC^d}|u_{N,t}|^2e^{-2\psi_{N,t}}(1+|z|^2)^{-r}dV(z)\leq \frac{4}{r^2}\int_{\CC^d}|\bar{\partial}\eta|^2e^{-2\psi_{N,t}}(1+|z|^2)dV(z).\end{equation}
Note that the $(0,1)$ form $\bar{\partial}\eta$ is supported in $B(z_0,\delta)\backslash B(z_0,\frac{\delta}{2})$; therefore both integrals are finite. Since $\psi_{N,t}(z)= d\max_{j=1,\dots,d}\log|z_j-z_{0,j}|+0(1)$ as $z\to z_0$ we conclude that $u_{N,t}(z_0)=0.$ Moreover, since $V_t\geq V$ by Lemma \ref{helpy} and (\ref{new43}) we obtain
$$
\int_{\CC^d}|u_{N,t}|^2e^{-2N(V_t-\frac{\epsilon}{2})}(1+|z|^2)^{-r}dV(z)\leq C_1e^{-2N(V(z_0)-\epsilon)}
$$
 where $C_1>0$ does not depend on either $N$ or $t$. 

Next, we let $f_{N,t}:=\eta-u_{N,t}.$ Then $f_{N,t}$ is a holomorphic function on $\CC^d$ such that $f_{N,t}(z_0)=1.$ Furthermore,
$$
\int_{\CC^d}|f_{N,t}|^2e^{-2N(V_t-\frac{\epsilon}{2})}(1+|z|^2)^{-r}dV(z)\leq C_2e^{-2N(V(z_0)-\epsilon)}
$$
and these bounds are uniform as $C_2>0$ is independent of $N$ and $t$. We extract a convergent subsequence $f_{N,t_k}\to f_N$ as $t_k\to 0$ where $f_N$ is a holomorphic function on $\CC^d$ satisfying $f_N(z_0)=1$ and 
\begin{equation}\label{holo}
\int_{\CC^d}|f_N|^2e^{-2N(V-\frac{\epsilon}{2})}(1+|z|^2)^{-r}dV(z)\leq C_2e^{-2N(V(z_0)-\epsilon)}.
\end{equation}
Finally, using $V\in L_{P,+}(\CC^d)$ we see that 
$$\int_{\CC^d}|f_N|^2e^{-2NH_P}(1+|z|^2)^{-r}dV(z)<\infty.$$ Taking $r>0$ sufficiently small, Proposition \ref{poly} implies that $f_N\in Poly(NP)$.

\medskip

\noindent {\bf STEP 2}: We want to modify $f_N\in Poly(NP)$ satisfying (\ref{holo}) and $f_N(z_0)=1$ to get $p_N$. Note $V(z)\leq Q(z)$ on all of $K$. Fix $\rho >0$ and for $r>0$ as above chosen sufficiently small, let 
$$C_r:= \min_{z\in K_{\rho}} (1+|z|^2)^{-r} \ \hbox{where} \ K_{\rho} =\{z: dist(z,K)\leq \rho\}.$$ There exists $\beta =\beta(\rho) >0$ with $|V(z)-V(y)|<\epsilon$ if $y,z\in K_{\rho}$ with $|y-z|<\beta$. Without loss of generality we may assume $\beta \leq \rho$ (or else replace $\beta$ by $\min [\beta, \rho]$). 

For $z\in K$, applying subaveraging to $|f_N|^2$ on $B(z,\beta)\subset K_{\rho}$ we have
$$|f_N(z)|^2\leq C_{\beta} \int_{B(z,\beta)}|f_N(y)|^2 dV(y).$$
Thus, for every $z\in K$
\begin{eqnarray*}
C_r |f_N(z)|^2e^{-2NQ(z)} &\leq& C_r|f_N(z)|^2e^{-2NV(z)}\\
& \leq& C_{\beta} \int_{B(z,\beta)}|f_N(y)|^2e^{-2NV(z)} (1+|y|^2)^{-r} dV(y)\\
&\leq & C_{\beta} \int_{B(z,\beta)}|f_N(y)|^2e^{-2N(V(y)-\epsilon)} (1+|y|^2)^{-r} dV(y)\\
&\leq& C_{\beta} C_2 e^{-2N(V(z_0)-\epsilon)}
\end{eqnarray*}
from (\ref{holo}). Thus taking $p_N:= \sqrt \frac{C_r}{C_{\beta} C_2}e^{N(V(z_0)-\epsilon)}f_N$ we have $p_N\in Poly(NP)$ and
$$\max_{z\in K} |p_N(z)e^{-NQ(z)}|\leq 1.$$
Finally,
$$\frac{1}{N} \log |p_N(z_0)|= V(z_0)-\epsilon + \frac{1}{2N} \log \frac{C_r}{C_{\beta} C_2}.$$
Since none of $C_r, C_{\beta},C_2$ depend on $N$, we have 
$$\frac{1}{N} \log |p_N(z_0)|> V(z_0)-2\epsilon \ \hbox{for} \ N \ \hbox{sufficiently large}.$$ 

This completes the proof of the pointwise convergence of 
$$\frac{1}{N} \log\Phi_N(z):= [\sup \{\frac{1}{N} \log |p(z)|: p\in Poly(NP), \ ||pe^{-NQ}||_K\leq 1\}]$$
to $V_{P,K,Q}(z)$. The local uniform convergence follows as in the proof of Lemma 3.2 of \cite{BS}; this utilizes the  observation that $\Phi_N \cdot \Phi_M \leq \Phi_{N+M}$.

\end{proof}

\begin{remark} \label{obvious} Let $u\in L_{P,+}\cap C(\CC^d)$ with $u \leq Q$ on $K$. The same argument as in Steps 1 and 2 applies to $u$ to show: given $z_0\in \CC^d$ and $\epsilon >0$ we can find $N$ large and $p_N\in Poly(NP)$ with 
$$\frac{1}{N} \log |p_N(z)|\leq Q(z), \ z\in K$$
and
$$\frac{1}{N} \log |p_N(z_0)|> u(z_0) -\epsilon.$$

\end{remark}

Note we have not assumed continuity of $Q$ in Theorem \ref{SZT}. We proceed to do the general case (Theorem \ref{maint}) using Theorem \ref{SZT}; i.e., having proved if $V_{P,K,Q}$ is continuous, then $V_{P,K,Q}=\tilde V_{P,K,Q}$, we verify the equality without this assumption. We begin with an elementary observation.

\begin{lemma} \label{elem} For any $K$ compact and $Q$ admissible,
$$V_{P,K,Q}(z)=\sup \{u(z): u\in L_{P,+}: u\leq Q \ \hbox{on} \ K\}.$$

\end{lemma}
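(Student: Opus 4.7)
The plan is to prove both inequalities, the nontrivial one being $\leq$. Since $L_{P,+} \subset L_P$, the inequality
$$V_{P,K,Q}(z) \geq \sup\{u(z): u \in L_{P,+}, \, u \leq Q \text{ on } K\}$$
is immediate from the definition of $V_{P,K,Q}$.

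For the reverse inequality, I want to show that any competitor $u \in L_P$ with $u \leq Q$ on $K$ can be enlarged pointwise to a competitor living in $L_{P,+}$. The natural device is the truncation
$$u_M(z) := \max\bigl(u(z),\, H_P(z) - M\bigr)$$
for a constant $M > 0$ to be chosen. Since $H_P \in L_P$ and $u \in L_P$, the function $u_M$ lies in $L_P$ as a max of two members of $L_P$; moreover $u_M(z) \geq H_P(z) - M$ everywhere, so $u_M \in L_{P,+}$ with constant $C_{u_M} = -M$.

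The key remaining check is that $u_M \leq Q$ on $K$ for $M$ sufficiently large, and crucially that the threshold for $M$ can be chosen independently of $u$. Because $K$ is compact and $H_P$ is continuous on $\CC^d$, $H_P$ is bounded above on $K$. Because $w = e^{-Q}$ is upper semicontinuous on the compact set $K$, it is bounded above, so $Q$ is bounded below on $K$. Hence
$$M_0 := \sup_{z \in K}\bigl(H_P(z) - Q(z)\bigr) < \infty,$$
where we allow $Q(z) = +\infty$ (contributing $-\infty$ to the supremum). For any $M \geq M_0$ we have $H_P - M \leq Q$ on $K$, and combining with the hypothesis $u \leq Q$ on $K$ gives $u_M = \max(u, H_P - M) \leq Q$ on $K$.

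Therefore, for each $u \in L_P$ with $u \leq Q$ on $K$, the function $u_{M_0} \in L_{P,+}$ satisfies the same constraint on $K$ and dominates $u$ pointwise, so
$$u(z) \leq u_{M_0}(z) \leq \sup\{v(z): v \in L_{P,+}, \, v \leq Q \text{ on } K\}$$
for every $z \in \CC^d$. Taking the sup over all such $u$ yields the reverse inequality. There is no real obstacle here beyond verifying that the threshold $M_0$ is finite and depends only on $K, Q, P$; this is exactly where compactness of $K$ and upper semicontinuity of $w$ enter.
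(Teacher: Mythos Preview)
Your proof is correct and follows essentially the same approach as the paper: both enlarge a competitor $u\in L_P$ to one in $L_{P,+}$ by taking the maximum with a downward-shifted copy of $H_P$ that stays below $Q$ on $K$. The only cosmetic difference is that the paper uses $\max\bigl(u,\, m+H_P(z/R)\bigr)$ (with $Q\ge m$ on $K$ and $K\subset D_R$, so that $H_P(z/R)=0$ on $K$), whereas you use $\max\bigl(u,\, H_P(z)-M\bigr)$ with $M\ge\sup_K(H_P-Q)$; both choices work for the same reasons.
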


\begin{proof} We have $Q$ is bounded below on $K$; say $Q\geq m$ on $K$. Now $K$ is bounded and so $K\subset D_R=\{z\in \CC^d: |z_j|\leq R, \ j=1,...,d\}$ for all $R$ sufficiently large. Then for $u\in L_P$ with $u\leq Q$ on $K$ we have 
$$\tilde u(z) := \max[u(z), m+ H_P(z/R)]\in L_{P,+}$$
with $\tilde u \leq Q$ on $K$.
\end{proof}

\begin{proof}[Proof of Theorem \ref{maint}]
First we show: {\it if $Q$ is continuous, then $V_{P,K,Q}=\tilde V_{P,K,Q}$.} From Lemma \ref{elem} and Remark \ref{obvious}, it suffices to show that if $u\in L_{P,+}$ with $u\leq Q$ on $K$, given $\epsilon >0$, for $t>0$ sufficiently small, $u_t$ defined in (\ref{ut}) satisfies $u_t \in L_{P,+}$ with $u_t\leq Q+\epsilon$ on $K$. That $u_t\in L_{P,+}$ follows from Proposition \ref{ga} and Remark \ref{32}. Since $u_t\downarrow u$ on $K$, $u_t|_K\in C(K)$, $u$ is usc on $K$, and $K$ is compact, by Dini's theorem, given $\epsilon >0$, there exists $t_0$ such that for all $t<t_0$ we have $u_t\leq Q +\epsilon$ on $K$, as desired. 

Finally, to show $V_{P,K,Q}=\tilde V_{P,K,Q}$ in the general case, i.e., where $Q$ is only lsc and admissible on $K$, we  utilize the argument in \cite{BL}, Lemma 7.3 (mutatis mutandis) to obtain the following.

\begin{proposition} Let $K\subset \CC^d$ be compact and let $w_j=e^{-Q_j}$ be admissible weights on $K$ with $Q_j\uparrow Q$. Then 
$$\lim_{j\to \infty} V_{P,K,Q_j}(z)= V_{P,K,Q}(z) \ \hbox{for all} \ z\in \CC^d.$$
\end{proposition}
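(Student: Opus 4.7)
My plan is to prove $V_{P,K,Q}(z)=\lim_j V_{P,K,Q_j}(z)$ pointwise by verifying the two inequalities separately. The inequality $\lim_j V_{P,K,Q_j}\le V_{P,K,Q}$ is immediate: since $Q_j\le Q$ on $K$, every $u\in L_P$ admissible for $V_{P,K,Q_j}$ is automatically admissible for $V_{P,K,Q}$, so $V_{P,K,Q_j}\le V_{P,K,Q}$, the sequence $(V_{P,K,Q_j})_j$ is monotone increasing in $j$, and its limit $W:=\lim_j V_{P,K,Q_j}$ satisfies $W\le V_{P,K,Q}$ on $\CC^d$.

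For the reverse inequality at a fixed $z_0\in\CC^d$, I would pick an arbitrary competitor $u\in L_P$ with $u\le Q$ on $K$ and an arbitrary $\epsilon>0$, and aim to show that for all $j$ sufficiently large (depending on $u$ and $\epsilon$), the perturbation $u-\epsilon$ is itself a competitor for $V_{P,K,Q_j}$, i.e., $u-\epsilon\le Q_j$ uniformly on $K$. Granting this, $W(z_0)\ge V_{P,K,Q_j}(z_0)\ge u(z_0)-\epsilon$; taking the supremum over admissible $u$ and then letting $\epsilon\to 0$ yields $W(z_0)\ge V_{P,K,Q}(z_0)$.

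To produce the uniform inequality, my plan is a Dini-type compactness argument applied to $f_j:=u-Q_j-\epsilon$. Each $f_j$ is upper semicontinuous on the compact set $K$ (since $u$ is psh, hence usc, and $Q_j$ is lsc); the sequence $(f_j)$ decreases in $j$ because $Q_j\uparrow Q$; and its pointwise limit $f:=u-Q-\epsilon$ is usc on $K$ and satisfies $f\le -\epsilon$ there because $u\le Q$ on $K$. Were the claim to fail, I could extract a subsequence $j_k$ and points $z_k\in K$ with $f_{j_k}(z_k)>-\epsilon/2$, and by compactness a convergent subsequence $z_k\to z_\infty\in K$. For any fixed $m$, once $j_k\ge m$ we have $f_m(z_k)\ge f_{j_k}(z_k)>-\epsilon/2$, so upper semicontinuity of $f_m$ forces $f_m(z_\infty)\ge -\epsilon/2$; since this holds for every $m$ and $f=\inf_m f_m$, we get $f(z_\infty)\ge -\epsilon/2$, contradicting $f\le -\epsilon$.

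The main delicacy I anticipate is the extended-real-valued bookkeeping: $u$ may equal $-\infty$ on a pluripolar set and $Q,Q_j$ may take the value $+\infty$, so I must check that $u-Q_j-\epsilon$ is unambiguously defined (taking the value $-\infty$ at such points) and that $f$ genuinely inherits upper semicontinuity on $K$ under these conventions. Beyond this bookkeeping step, no further pluripotential-theoretic input is needed; the argument is essentially a compactness-plus-semicontinuity exercise driven by the fact that $u\le Q$ on $K$ leaves us a safety margin of size $\epsilon$ to absorb the non-uniform convergence of $Q_j\uparrow Q$.
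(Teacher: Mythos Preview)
Your argument is correct and is precisely the standard Dini-type compactness proof one expects here; the paper does not spell out a proof but simply cites \cite{BL}, Lemma 7.3 (mutatis mutandis), which is exactly this argument. Your bookkeeping concern is harmless: since $u$ is psh it takes values in $[-\infty,+\infty)$ and each $Q_j$ is lsc with values in $(-\infty,+\infty]$, so $f_j=u-Q_j-\epsilon$ is unambiguously $[-\infty,+\infty)$-valued and usc on $K$, which is all your contradiction step needs.
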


Taking $Q_j\in C(K)$ with $Q_j\uparrow Q$, since $V_{P,K,Q_j}=\tilde V_{P,K,Q_j}\leq \tilde V_{P,K,Q}$ for all $j$, we conclude from the proposition that $V_{P,K,Q}=\tilde V_{P,K,Q}$. This concludes the proof of Theorem \ref{maint}. \end{proof}

 We finish this section with some remarks on regularity of $P-$extremal functions. Recall that a compact set $K$ is {\it $L-$regular} if $V_K=V_{\Sigma,K}$ is continuous on $K$ (and hence on $\CC^d$) and $K$ is {\it locally $L-$regular} if it is locally $L-$regular at each point $a\in K$; i.e., if for each $r>0$ the function $V_{K\cap \bar B(a,r)}$ is continuous at $a$ where $\bar B(a,r)=\{z:|z-a|\leq r\}$. For a convex body $P\subset (\RR^+)^d$ we define the analogous notions of {\it $PL-$regular} and {\it locally $PL-$regular} by replacing $V_K$ by $V_{P,K}$. For any such $P$ there exists $A>0$ with $P\subset A\Sigma$; hence
$$V_{P,K}(z)\leq A \cdot V_K(z) \ \hbox{and} \ V_{P,K\cap \bar B(a,r)}(z)\leq A \cdot V_{K\cap \bar B(a,r)}(z)$$
so if $K$ is $L-$regular (resp., locally $L-$regular) then $K$ is $PL-$regular (resp., locally $PL-$regular). Note for $P$ satisfying (\ref{sigmainkp}) there exist $0<a<b<\infty$ with  $a\Sigma \subset P \subset b\Sigma$ so that $K$ is locally $PL-$regular if and only if $K$ is locally $L-$regular.

\begin{corollary} For $K$ compact and locally $L-$regular and $Q$ continuous on $K$, $V_{P,K,Q}$ is continuous.
\end{corollary}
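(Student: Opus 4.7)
The plan is to establish upper and lower semi-continuity of $V_{P,K,Q}$ separately; together they force continuity. Because $P$ satisfies (\ref{sigmainkp}), the paragraph preceding the corollary identifies local $L$-regularity of $K$ with local $PL$-regularity, so at each $z_0\in K$ and each $r>0$ the extremal function $V_{P,K\cap \bar B(z_0,r)}$ is continuous at $z_0$ with value $0$ there.

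For upper semi-continuity I would show $V_{P,K,Q}^*\leq Q$ pointwise on $K$. Since $V_{P,K,Q}^*\in L_P$, this will make $V_{P,K,Q}^*$ a competitor in the supremum defining $V_{P,K,Q}$ and therefore yield $V_{P,K,Q}=V_{P,K,Q}^*$, which is usc. Fix $z_0\in K$ and $\varepsilon>0$. Continuity of $Q$ supplies $r>0$ with $Q\leq Q(z_0)+\varepsilon$ on $K\cap \bar B(z_0,r)$. For any $u\in L_P$ with $u\leq Q$ on $K$, the function $u-(Q(z_0)+\varepsilon)$ again lies in $L_P$ and is nonpositive on $K\cap \bar B(z_0,r)$, hence is dominated pointwise by $V_{P,K\cap \bar B(z_0,r)}$. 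Taking the supremum over such $u$ and then $\limsup_{z\to z_0}$ of both sides gives
\[
V_{P,K,Q}^*(z_0)\leq Q(z_0)+\varepsilon+V_{P,K\cap \bar B(z_0,r)}^*(z_0)=Q(z_0)+\varepsilon,
\]
the last equality by local $PL$-regularity at $z_0$. Letting $\varepsilon\downarrow 0$ gives $V_{P,K,Q}^*(z_0)\leq Q(z_0)$, completing the step.

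For lower semi-continuity I would invoke Theorem \ref{maint}: $V_{P,K,Q}(z)=\lim_n \frac{1}{n}\log\Phi_n(z)$ pointwise on $\CC^d$. Because $0\in P$, constants belong to $Poly(nP)$, which forces $\Phi_n\geq e^{n\min_K Q}>0$ everywhere. The closed ball $\{p\in Poly(nP):\|pe^{-nQ}\|_K\leq 1\}$ is compact in the finite-dimensional space $Poly(nP)$, so joint continuity of $(p,z)\mapsto|p(z)|$ makes $\Phi_n$ continuous, whence each $\frac{1}{n}\log\Phi_n$ is continuous. The multiplicative inequality $\Phi_n\Phi_m\leq \Phi_{n+m}$ of Remark \ref{nm} gives $\frac{1}{2^{k+1}}\log\Phi_{2^{k+1}}\geq \frac{1}{2^k}\log\Phi_{2^k}$, so along the subsequence $n=2^k$ we obtain an increasing sequence of continuous functions whose pointwise limit is $V_{P,K,Q}$. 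A monotone limit of continuous functions is lsc, and combining with the previous step gives continuity.

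The main obstacle I anticipate is the usc step: one must propagate the bound from an arbitrary candidate $u$ through both the supremum and the usc regularization, using local $PL$-regularity to kill the truncated extremal function $V_{P,K\cap \bar B(z_0,r)}^*$ at $z_0$. By contrast, the lsc step is essentially a repackaging of Theorem \ref{maint} together with the multiplicative inequality for $\Phi_n$, once one notices that $\Phi_n$ itself is continuous and strictly positive.
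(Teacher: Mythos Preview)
Your proof is correct and follows essentially the same route as the paper. The upper semicontinuity step---bounding $V_{P,K,Q}$ by $Q(z_0)+\varepsilon+V_{P,K\cap\bar B(z_0,r)}$ via the definition and then invoking local $PL$-regularity to kill the truncated extremal function at $z_0$---matches the paper's argument exactly. For lower semicontinuity the paper simply asserts it as a consequence of Theorem~\ref{maint}, the implicit reason being that $\tilde V_{P,K,Q}$ is a pointwise supremum of the continuous functions $\tfrac{1}{\deg_P(p)}\log|p|$; your version, establishing continuity of each $\Phi_n$ and extracting the monotone subsequence $n=2^k$, is a more explicit packaging of the same idea.
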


\begin{proof} From Theorem \ref{maint}, $V_{P,K,Q}$ is lowersemicontinuous. We show $V^*_{P,K,Q}\leq Q$ on $K$ from which it follows that $V^*_{P,K,Q}\leq V_{P,K,Q}$ and hence equality holds and $V_{P,K,Q}$ is continuous. 

Since $K$ is locally $L-$regular, it is locally $PL-$regular. Given $a\in K$ and $\epsilon >0$, choose $r>0$ small so that $Q(z)\leq Q(a)+\epsilon$ for $z\in K\cap \bar B(a,r)$. Then
$$V_{P,K,Q}(z)\leq V_{P,K\cap \bar B(a,r), Q(a)+\epsilon}(z)=Q(a)+\epsilon +V_{P,K\cap \bar B(a,r)}(z)$$
for all $z\in \CC^d$. Thus, at $a$, $V_{P,K,Q}(a)\leq Q(a)+\epsilon$. Moreover, by continuity of $V_{P,K\cap \bar B(a,r)}$ at $a$, we have $V_{P,K\cap \bar B(a,r)}(z)\leq \epsilon$ for $z\in \bar B(a,\delta)$, $\delta >0$ sufficiently small. Thus 
$$V_{P,K,Q}^*(a)\leq Q(a)+2\epsilon$$
which holds for all $\epsilon >0$.

\end{proof}

\begin{remark} The converse-type result that for a compact set $K\subset \CC^d$, if $V_{P,K,Q}$ is continuous for every $Q$ continuous on $K$ then $K$ is locally $PL-$regular, follows exactly as in \cite{Dieu} Proposition 6.1. 
\end{remark}

\section{Appendix}

We provide a version of the lemma from Ferrier \cite{F} appropriate for our purposes to show $u_t$ in Proposition \ref{ga} is psh. For $\lambda >0$, we use the distance function $d_{\lambda}:\CC^d \times \CC\to [0,\infty)$ defined as $d_{\lambda}(z,w)=\lambda |z|+|w|$ (in our application, $t=1/\lambda$). 

\begin{lemma} Let $\delta:\CC^d \to [0,\infty)$ be nonnegative. For $\lambda >0$, define
$$\hat \delta_{\lambda}(s):=\inf_{s'\in \CC^d}[\delta(s')+\lambda |s'-s|].$$
Let 
$$\Omega_1:=\{(s,t)\in \CC^d \times \CC: |t|< \delta(s)\}.$$
Then
\begin{equation}\label{trouble}\hat \delta_{\lambda}(s)= d_{\lambda}\bigl( (s,0),(\CC^d \times \CC) \setminus \Omega_1 \bigr).\end{equation}
Furthermore, if $\delta$ is lsc, then $\Omega_1$ is open. Moreover,
$$\Omega_1=\{(s,t): -\log \delta(s) + \log |t|<0\}$$
so that if, in addition, $-\log \delta$ is psh in $\CC^d$, then $\Omega_1$ is pseudoconvex in $\CC^d \times \CC$.  
\end{lemma}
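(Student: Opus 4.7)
The statement splits naturally into four assertions, three of which are essentially bookkeeping and one of which invokes a standard fact about pseudoconvexity.

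For the identity \eqref{trouble}, my plan is to observe that a point $(s',t') \in \CC^d\times\CC$ lies in the complement of $\Omega_1$ precisely when $|t'|\geq \delta(s')$. Then by definition
$$d_{\lambda}\bigl((s,0),(\CC^d \times \CC)\setminus \Omega_1\bigr) = \inf_{(s',t')\notin \Omega_1}\bigl[\lambda|s-s'|+|t'|\bigr].$$
For each fixed $s'$, the inner infimum over $\{t' : |t'|\geq \delta(s')\}$ of $|t'|$ is $\delta(s')$, so the right-hand side collapses to $\inf_{s'}[\lambda|s-s'|+\delta(s')] = \hat\delta_{\lambda}(s)$. Edge cases $\delta(s')=0$ (where $t'=0$ is admissible) cause no issue.

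For openness, I would write $\Omega_1 = \{(s,t)\in \CC^d\times\CC : |t|-\delta(s)<0\}$. Since $\delta$ is lsc, $-\delta$ is usc, and adding the continuous function $(s,t)\mapsto |t|$ yields a usc function whose strict negative sublevel set is open. The reformulation $\Omega_1=\{-\log\delta(s)+\log|t|<0\}$ is then immediate from the equivalence $|t|<\delta(s) \iff \log|t|<\log\delta(s)$, with the usual conventions $\log 0 = -\infty$ handling the boundary cases (if $\delta(s)=0$ the defining inequality fails on both sides, and if $|t|=0$ it reduces to $\delta(s)>0$, consistent with $-\log\delta(s)<+\infty$).

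Finally, for pseudoconvexity under the assumption that $-\log\delta$ is psh on $\CC^d$, I plan to set $\varphi(s,t):=-\log\delta(s)+\log|t|$ and observe that $\varphi$ is psh on $\CC^d\times\CC$: the first summand is psh as the pullback of a psh function under the holomorphic projection $(s,t)\mapsto s$, and the second is the classical psh function $\log|t|$ on $\CC$, pulled back under $(s,t)\mapsto t$. Then $\Omega_1=\{\varphi<0\}$ is a strict sublevel set of a psh function on $\CC^d\times\CC$, and by the standard fact that such sublevel sets are pseudoconvex (one may verify this via the Kontinuit\"atssatz, or directly by checking that $\max(\varphi,-\log\,\mathrm{dist}(\cdot,\partial(\CC^d\times\CC)))$ — trivially $\varphi$ here — gives a psh exhaustion on $\Omega_1$), the claim follows. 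No step is a genuine obstacle; the only subtle point is the careful handling of the $\{-\infty,+\infty\}$ values of $\log\delta$ and $\log|t|$ in the reformulation of $\Omega_1$, which is why the lsc hypothesis on $\delta$ (equivalently, usc on $-\log\delta$ from below via the convention $-\log 0=+\infty$) is what we use.
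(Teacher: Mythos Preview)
Your argument is correct and follows the same route as the paper: both reduce \eqref{trouble} to the observation that for fixed $s'$ the infimum of $|t'|$ over $\{|t'|\ge\delta(s')\}$ equals $\delta(s')$, and both deduce pseudoconvexity directly from the description of $\Omega_1$ as the sublevel set $\{-\log\delta(s)+\log|t|<0\}$ of a psh function on $\CC^d\times\CC$. One small slip in your parenthetical: $\varphi$ itself is not a psh exhaustion of $\Omega_1$ (it is bounded above by $0$ there); the standard exhaustion is $-\log(-\varphi)+|s|^2+|t|^2$, but your appeal to the Kontinuit\"atssatz already suffices, and the paper likewise treats this step as immediate.
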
     

\begin{proof} This is straightforward; first observe
  $$d_{\lambda}\bigl( (s,0),(\CC^d \times \CC) \setminus \Omega_1 \bigr)=\inf \{\lambda |s-s'|+  |t|:(s',t)\in (\CC^d \times \CC) \setminus \Omega_1 \}$$
  $$=     \inf \{\lambda |s-s'|+  |t|:|t|\geq \delta(s') \} = \inf \{\lambda |s-s'|+  \delta(s'):s'\in \CC^d \}=\hat \delta_{\lambda}(s).$$  
  Next,    
  $$\Omega_1:=\{(s,t)\in \CC^d \times \CC: |t|< \delta(s)\}$$
  $$=\{(s,t)\in \CC^d \times \CC: -\log \delta(s) + \log |t|<0\}.$$
  \end{proof}

\begin{corollary} Under the hypotheses of the lemma, if $-\log \delta$ is psh in $\CC^d$ then $-\log \hat \delta_{\lambda}$ is psh.
\end{corollary}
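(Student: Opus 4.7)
The plan is to identify $\hat{\delta}_\lambda(s)$ as a boundary distance from $(s,0)$ to $\partial\Omega_1$ measured in a norm on $\CC^{d+1}$ whose unit ball is convex and balanced, and then to invoke the classical theorem (see, e.g., H\"ormander, \emph{An Introduction to Complex Analysis in Several Variables}, Theorem 2.6.7) asserting that for a pseudoconvex domain $\Omega\subset\CC^N$ and a convex balanced open neighborhood $B$ of $0$ in $\CC^N$, the function $z\mapsto -\log\sup\{r>0:z+rB\subset\Omega\}$ is plurisubharmonic on $\Omega$.

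First I would check that $(z,w)\mapsto \lambda|z|+|w|$ is a norm on $\CC^d\times\CC$ whose open unit ball
$$B:=\{(z,w)\in\CC^d\times\CC:\lambda|z|+|w|<1\}$$
is convex and balanced. The preceding lemma shows $\Omega_1$ is open and pseudoconvex and identifies
$$\hat{\delta}_\lambda(s)=d_\lambda\bigl((s,0),(\CC^d\times\CC)\setminus\Omega_1\bigr)=\sup\{r>0:(s,0)+rB\subset\Omega_1\}.$$
Applying the cited theorem to $\Omega_1$ with this $B$ then yields that
$$F(s,t):=-\log d_\lambda\bigl((s,t),(\CC^d\times\CC)\setminus\Omega_1\bigr)$$
is plurisubharmonic on $\Omega_1$.

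To conclude, I would restrict $F$ to the affine slice $\{t=0\}$. Since $-\log\delta$ is plurisubharmonic it does not attain the value $+\infty$, so $\delta(s)>0$ for every $s\in\CC^d$ and this slice lies entirely in $\Omega_1$; the restriction of a plurisubharmonic function to a complex affine subspace is plurisubharmonic, so $s\mapsto F(s,0)=-\log\hat{\delta}_\lambda(s)$ is plurisubharmonic on $\CC^d$, as desired. The only real point to verify is that the H\"ormander-type theorem applies in this norm setting, which amounts to the convex-balanced check on $B$; given pseudoconvexity of $\Omega_1$ from the lemma, I anticipate no further obstacle.
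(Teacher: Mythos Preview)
Your proposal is correct and follows essentially the same route as the paper's proof: both use the lemma to get that $\Omega_1$ is pseudoconvex, invoke the standard result that $-\log$ of the boundary distance with respect to a norm is plurisubharmonic on a pseudoconvex domain, and then restrict to the slice $\{t=0\}$. Your version is in fact more careful than the paper's, which simply asserts that $U(s,t)$ is psh and restricts; you explicitly verify that $d_\lambda$ comes from a norm with convex balanced unit ball (so that H\"ormander's Theorem 2.6.7 applies) and that the slice $\{t=0\}$ actually lies inside $\Omega_1$ because plurisubharmonicity of $-\log\delta$ forces $\delta>0$ everywhere.
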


\begin{proof} Since $\Omega_1$ is pseudoconvex in $\CC^d \times \CC$ and $d_{\lambda}:\CC^d \times \CC\to [0,\infty)$ is a distance function, we have
$$U(s,t):=-\log d\bigl( (s,t),  (\CC^d \times \CC) \setminus \Omega_1 \bigr)$$
is psh. Thus $U(s,0)= -\log \hat \delta_{\lambda}(s)$ is psh.

\end{proof}

                               \end{document}